\tikzset{>=latex}
\def\centerarc[#1](#2)(#3:#4:#5);%
\DeclareMathOperator{\arccot}{arccot}
\DeclareMathOperator{\Res}{Res}
\DeclareMathOperator{\sgn}{sgn}
\newtheorem{lemma}{Lemma}[section]
\newtheorem{theorem}{Theorem}[section]
\newtheorem{corollary}{Corollary}[section]
\newcommand{\M}{\mathbf M}
\newcommand{\Ma}{\mathbf M_\alpha}
\newcommand{\R}{\mathbb{R}}
\newcommand{\Dag}{D_\gamma^\alpha}
\newcommand{\DD}{\mathcal{D}}
\newcommand{\Da}{\mathcal{D}^{\alpha}}
\newcommand{\Db}{\mathcal{D}^{\alpha-1}}
\newcommand{\barDa}{\overline{\mathcal{D}^{\alpha}}}
\newcommand{\barDb}{\overline{\mathcal{D}^{\alpha-1}}}
\newcommand{\FF}{\mathcal{F}}
\DeclareMathOperator{\erf}{erf}
\theoremstyle{remark}
\begin{document}

\title{Numerical Approximation of Riesz-Feller Operators on $\mathbb R$}

\author{Carlota M. Cuesta\footnotemark[2] \and Francisco de la Hoz\footnotemark[2] \and Ivan Girona\footnotemark[2]}

\date{
	\footnotesize
	$^1$Department of Mathematics, Faculty of Science and Technology, University of the Basque Country UPV/EHU, Barrio Sarriena S/N, 48940 Leioa, Spain
}

\maketitle

\begin{abstract}
In this paper, we develop an accurate pseudospectral method to approximate numerically the Riesz-Feller operator $D_\gamma^\alpha$ on $\R$, where $\alpha\in(0,2)$, and $|\gamma|\le\min\{\alpha, 2 - \alpha\}$. This operator can be written as a linear combination of the Weyl-Marchaud derivatives $\Da$ and $\barDa$, when $\alpha\in(0,1)$, and of $\partial_x\Db$ and $\partial_x\barDb$, when $\alpha\in(1,2)$.

Given the so-called Higgins functions $\lambda_k(x) = ((ix-1)/(ix+1))^k$, where $k\in\mathbb Z$, we compute explicitly, using complex variable techniques, $\Da[\lambda_k](x)$, $\barDa[\lambda_k](x)$, $\partial_x\Db[\lambda_k](x)$, $\partial_x\barDb[\lambda_k](x)$ and $D_\gamma^\alpha[\lambda_k](x)$, in terms of the Gaussian hypergeometric function ${}_2F_1$, and relate these results to previous ones for the fractional Laplacian. This enables us to approximate $\Da[u](x)$, $\barDa[u](x)$, $\partial_x\Db[u](x)$, $\partial_x\barDb[u](x)$ and $D_\gamma^\alpha[u](x)$, for bounded continuous functions $u(x)$. Finally, we simulate a nonlinear Riesz-Feller fractional diffusion equation, characterized by having front propagating solutions whose speed grows exponentially in time.

\end{abstract}

\medskip

\noindent\textit{Keywords:}

\noindent Weyl-Marchaud derivatives, Riesz-Feller operators, Pseudospectral methods, Riesz-Feller fractional diffusion equations

\section{Introduction}

In this paper, we occupy ourselves with the so-called Riesz-Feller operators. A general Riesz-Feller operator $\Dag$ of order $\alpha\in(0,2)$ and skewness $\gamma\in\mathbb R$, with $|\gamma| \leq \min\{\alpha, 2-\alpha\}$, can be defined by means of a Fourier multiplier operator (see, e.g., \cite{mainardiluchkopagnini2001}):
\begin{equation}\label{RF:operator}
\FF(\Dag[u])(\xi) = \psi^\alpha_\gamma(\xi) \FF(u)(\xi),
\end{equation}
where the Fourier symbol is given by
\begin{equation}\label{RF:symbol}
\psi^\alpha_\gamma(\xi) = - |\xi|^\alpha e^{-i \sgn(\xi) \gamma\frac\pi2}.
\end{equation}
Recall that $\FF$, which denotes the Fourier transform, is defined as
\begin{equation*}
\FF(u)(\xi) = \hat{u}(\xi)= \frac{1}{\sqrt{2\pi}} \int_{-\infty}^\infty{u(x) e^{-i\xi x} dx} \Longleftrightarrow u(x) = \frac{1}{\sqrt{2\pi}} \int_{-\infty}^\infty{\hat u(\xi) e^{i\xi x} d\xi}.
\end{equation*}
We note that the definition we use here for Riesz-Feller operators differs from the one in \cite{mainardiluchkopagnini2001}, because such definition uses the complex conjugate of $\FF(u)(\xi)$ in the definition of the Fourier transform (up to a scaling factor).

The following integral representations, given in \cite[Proposition~2.3]{AchleitnerKuehn2015} (see also \cite{Cifani,mainardiluchkopagnini2001,Sato}), are an alternative way of dealing with these pseudo-differential operators. More precisely, defining (see, e.g., \cite{mainardiluchkopagnini2001})
\begin{equation*}
c^1_{\gamma,\alpha} = \frac{\Gamma(1+\alpha)}{\pi}\sin\left( (\alpha-\gamma) \frac\pi2\right) \quad  \text{and} \quad c^2_{\gamma,\alpha} = \frac{\Gamma(1+\alpha)}{\pi}\sin\left( (\alpha+\gamma) \frac\pi2\right),
\end{equation*}
which satisfy $c^1_{\gamma,\alpha} + c^2_{\gamma,\alpha} >0$, we distinguish between the following cases:

\begin{itemize}
	
\item For $\alpha\in(0,1)$ and $|\gamma| \leq \alpha$,
\begin{equation}
\label{RF:01:integral}
\Dag[u](x) = c^1_{\gamma,\alpha} \int_{0}^{\infty} \frac{u(x-z) -u(x)}{z^{1+\alpha}} dz+ c^2_{\gamma,\alpha} \int_{0}^{\infty} \frac{u(x+z) -u(x)}{z^{1+\alpha}} dz.
\end{equation}

\item For $\alpha\in(1,2)$ and $|\gamma| \leq 2 - \alpha$,
\begin{align}
\label{RF:integral}
\Dag[u](x) & = c^1_{\gamma,\alpha} \int_{0}^{\infty} \frac{u(x-z) -u(x) + u'(x) z}{z^{1+\alpha}} dz 
 + c^2_{\gamma,\alpha} \int_{0}^{\infty} \frac{u(x+z) -u(x) - u'(x) z}{z^{1+\alpha}} dz.
\end{align}

\item For $\alpha\in(0,2)$ and $\gamma=0$, we have $c_{0,\alpha}^1=c_{0,\alpha}^2$, so we obtain minus the fractional Laplacian (see \cite{kwasnicki}):
\begin{equation}
	\label{e:fraclap}
	(-\Delta)^{\alpha/2}u(x) = c_\alpha\int_{-\infty}^\infty\frac{u(x)-u(x+y)}{|y|^{1+\alpha}}dy,
\end{equation}
where the integral (and, in general, all the integrals over $\mathbb R$ in this paper) must be understood in the principal value sense, and
\begin{equation}
\label{e:ca}
c_\alpha = \alpha\frac{2^{\alpha-1}\Gamma(\frac12 + \frac\alpha2)}{\sqrt{\pi}\Gamma(1-\frac\alpha2)}.
\end{equation}
Indeed,
\begin{equation}
\label{e:equivDzeroafraclap}
D_0^\alpha\equiv -(-\Delta)^{\alpha/2},
\end{equation}
since 
$$
c_{0,\alpha}^1=\frac{\Gamma(1+\alpha)}{ \pi}\sin\left(\alpha\frac\pi2\right)=\alpha\frac{2^{\alpha-1}\Gamma(\frac12 + \frac\alpha2)}{\sqrt{\pi}\Gamma(1-\frac\alpha2)}=c_\alpha,
$$
which can be checked by using Euler's reflection formula:
\begin{equation}
	\label{Euler:R}
	\Gamma(\omega)\Gamma(1-\omega)=  \frac{\pi}{\sin(\omega \pi)},
\end{equation}
and Legendre's duplication formula:
\begin{equation}
	\label{e:legendre}
	\Gamma(\omega)\Gamma\left(\omega+\frac12\right)=2^{1-2\omega}\sqrt{\pi}\Gamma(2\omega),
\end{equation}
taking $\omega=\alpha/2$ in both cases.	

\item For $\alpha=1$ and $|\gamma|\leq 1$ (see, e.g., \cite{mainardiluchkopagnini2001}),
\begin{equation*}
 D^1_\gamma[u](x) = -\cos \left(\gamma\frac\pi2\right)\frac{d}{dx}\mathcal H(u(x)) +\sin \left(\gamma\frac\pi2\right) u'(x),
\end{equation*}
where $\mathcal H(u(x))$ is the Hilbert transform of $u(x)$:
$$
\mathcal H(u(x)) = \frac1\pi\int_{-\infty}^\infty\frac{u(z)}{x-z}dz.
$$
Recall that, $(d/dx)\mathcal H(u(x)) = \mathcal H(u'(x))$. Moreover, the derivative of the Hilbert transform is equivalent to $(-\Delta)^{1/2}$, which is the fractional Laplacian \eqref{e:fraclap} for $\alpha = 1$, and is also known as the half Laplacian \cite{CuestaDelaHozGirona2023}. Therefore, we have
\begin{equation}
\label{e:D1g}
D^1_\gamma[u](x) = -\cos \left(\gamma\frac\pi2\right)(-\Delta)^{1/2}u(x) +\sin \left(\gamma\frac\pi2\right) u'(x).
\end{equation}

\end{itemize}

On the other hand, the limits $\alpha\to 2^-$ and $\alpha\to 0^+$ imply necessarily $|\gamma|\to 0$, and we formally have the second derivative in the former case, and the identity in the latter case. Observe that, as with the fractional Laplacian, the latter is a singular limit:
\begin{align*}
(-\Delta)^{0}1 & = 1, \text{ but } (-\Delta)^{\alpha/2}1=0,\text{ if }\alpha>0,
	\cr
\DD^0[1] & = 1 \text{ and } \overline{\DD^0}[1]=-1,  \text{ but } \DD^\alpha[1]=0 \text{ and } \overline{\DD^\alpha}[1]=0,\text{ if }\alpha > 0.
\end{align*}
A related operator, known in the literature of fractional derivatives as a right Weyl-Marchaud fractional derivative of order $\alpha$ (see \cite{Marchaud,Weyl}), is given by
\begin{equation}\label{e:Dax}
\Da[u](x) = \frac{1}{\Gamma(-\alpha)}
\int_{-\infty}^{0}\frac{u(x+z)-u(x)}{|z|^{1+\alpha}} dz,\quad \alpha\in(0,1),
\end{equation}
where $u\in\mathcal C^0_b(\mathbb R)$;  note that $\mathcal C_b^r(\mathbb R)$ denotes the set of bounded $r$-times continuous functions defined on $\mathbb R$.

Our motivation is to study certain models where such operators appear, and that give rise to moving front solutions, such as \cite{SK1985,Fowler,NV,achleitnerhittmeirschmeiser2011,achleitnercuestahittmeir,AchleitnerKuehn2015,AU,diezcuesta2020,cuestadiaz2023}. For example, the operator $\partial_x\Da$,  where $\partial_x\Da[\cdot](x) \equiv (d/dx)(\Da[\cdot](x))$, appears in the study of certain boundary-layer equations derived from shallow-water models \cite{NV} and other related models, such as those described in \cite{SK1985,Fowler}. The mathematical analysis of such equations can be found in, e.g., \cite{achleitnerhittmeirschmeiser2011,achleitnercuestahittmeir,AchleitnerKuehn2015,AU,diezcuesta2020,cuestadiaz2023}, and a numerical method for $\partial_x\Da$, for $\alpha\in(0,1)$, is derived in \cite{delahozcuesta2016}. Note that $\partial_x\Da$ is of Riesz-Feller type with order $1+\alpha$ and skewness $\gamma = 1-\alpha$, as we will explain below.

It is known, see \cite{CabreRoquejoffre2013}, that nonlocal Fisher equations generate exponentially fast accelerating front solutions. We remark that the class of operators considered in \cite{CabreRoquejoffre2013} contains Riesz-Feller operators in one dimension; therefore, this model will serve us as a good test for the numerical methods of evolution equations.

We observe that \eqref{RF:01:integral} and \eqref{RF:integral} can be written in terms of Weyl-Marchaud derivatives and their derivatives, which is done in \cite{diezcuesta2020,cuestadiaz2023}. For the sake of completeness, we recall the key lemmas.
\begin{lemma}[Equivalent representations of $\Da$ and $\partial_x \Da$]\label{equiv:repre}
 Let $\alpha \in (0,1)$, $u \in\mathcal C_b^1(\R)$ and $x\in \R$. Then,
\begin{equation}\label{Int:Dalpha}
\Da[u](x) = \frac{1}{\Gamma(1-\alpha)} \int^{\infty}_{0} \frac{u'(x-z)}{z^\alpha} dz.
\end{equation}
Moreover, if $u \in \mathcal C^2_b(\R)$,
\begin{align}
\label{e:xDax}
  \partial_x \Da[u](x) &=
  \frac{1}{\Gamma(-1-\alpha)} \int^{\infty}_{0} \frac{u(x-z)-u(x)+ u'(x)z}{z^{2+\alpha}} dz,
  \\
  \partial_x \Da[u](x) &=\frac{1}{\Gamma(-\alpha)}\int^{\infty}_{0} \frac{u'(x-z)-u'(x)}{z^{1+\alpha}} dz,
  \cr
\nonumber	\partial_x \Da[u](x) & = \frac{1}{\Gamma(1-\alpha)}\int_0^\infty\frac{u''(x-z)}{z^\alpha}dz.
\end{align}
\end{lemma}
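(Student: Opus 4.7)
The plan is to combine three standard manoeuvres---a sign-flip change of variables, integration by parts, and differentiation under the integral sign---and to reconcile the resulting $\Gamma$-prefactors using the functional equation $\Gamma(1+s)=s\,\Gamma(s)$.

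First, the substitution $z\mapsto -z$ in \eqref{e:Dax} rewrites $\Da[u](x)$ as $\frac{1}{\Gamma(-\alpha)}\int_0^\infty (u(x-z)-u(x))\,z^{-1-\alpha}\,dz$. Integrating by parts with $dv=z^{-1-\alpha}\,dz$, $v=-z^{-\alpha}/\alpha$ and invoking $u\in\mathcal C^1_b$ makes the boundary term at $z=0$ vanish like $z^{1-\alpha}$ (via $u(x-z)-u(x)=-u'(x)z+o(z)$ and $\alpha<1$), while boundedness of $u$ kills the contribution at $z=\infty$ through the $z^{-\alpha}$ factor. What remains is $-\frac{1}{\alpha}\int_0^\infty u'(x-z)\,z^{-\alpha}\,dz$, and $-\alpha\,\Gamma(-\alpha)=\Gamma(1-\alpha)$ converts the prefactor into $1/\Gamma(1-\alpha)$, producing \eqref{Int:Dalpha}.

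For the three representations of $\partial_x\Da[u]$, I would first differentiate the rewritten $\Da[u]$ under the integral sign (justified by dominated convergence when $u\in\mathcal C^2_b$, since the $x$-derivative of the integrand is $O(z^{-\alpha})$ near $0$ and $O(z^{-1-\alpha})$ at infinity), obtaining the middle identity
\[
\partial_x\Da[u](x)=\frac{1}{\Gamma(-\alpha)}\int_0^\infty \frac{u'(x-z)-u'(x)}{z^{1+\alpha}}\,dz.
\]
Differentiating \eqref{Int:Dalpha} under the integral gives the third identity at once. For \eqref{e:xDax}, I would integrate by parts in the middle identity using $w(z)=u(x-z)-u(x)+u'(x)z$, whose derivative is precisely $-(u'(x-z)-u'(x))$: the $\mathcal C^2_b$ Taylor expansion $w(z)=\frac12 u''(x)z^2+o(z^2)$ suppresses the boundary at $0$, while $w(z)/z^{1+\alpha}=O(z^{-\alpha})$ suppresses it at infinity. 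A second use of the functional equation, this time in the form $\Gamma(-\alpha)=-(1+\alpha)\,\Gamma(-1-\alpha)$, converts the constant into $1/\Gamma(-1-\alpha)$.

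The main obstacle is careful bookkeeping rather than a conceptual difficulty: each integration by parts sits on the borderline of convergence, and the cancellation of boundary terms must be verified by matching the Taylor expansions of $u$ or $u'$ against the singular kernels $z^{-1-\alpha}$ or $z^{-2-\alpha}$. The hypotheses $\mathcal C^1_b$ for \eqref{Int:Dalpha} and $\mathcal C^2_b$ for the $\partial_x\Da$ identities are precisely what is needed to make these borderline boundary contributions vanish; everything else is algebra on $\Gamma$-values.
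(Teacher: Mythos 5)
Your proposal is correct, and it is both more self-contained and organized differently from the paper's proof. The paper simply imports \eqref{Int:Dalpha} and \eqref{e:xDax} from an earlier reference and only proves the last two identities, each time by writing a difference as an integral of a derivative (fundamental theorem of calculus) and then swapping the order of integration via Fubini; you instead prove all four statements from the definition \eqref{e:Dax}, replacing the FTC--Fubini step by its dual manoeuvre, a one-dimensional integration by parts with an explicit check that the boundary terms vanish ($O(z^{1-\alpha})$ at the origin via the Taylor expansion, $O(z^{-\alpha})$ at infinity via boundedness), together with differentiation under the integral sign. The logical order is also reversed: the paper goes from \eqref{e:xDax} to the $u'(x-z)-u'(x)$ identity, while you derive \eqref{e:xDax} \emph{from} that identity; the $\Gamma$-bookkeeping via $\Gamma(1+s)=s\Gamma(s)$ is the same in both. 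What your route buys is independence from the cited reference and a transparent accounting of why $\mathcal C^1_b$ versus $\mathcal C^2_b$ is needed; what the paper's Fubini route buys is that it never has to discuss boundary terms at all. One caveat worth flagging: in your derivation of the $u''$ identity by differentiating \eqref{Int:Dalpha} under the integral sign, the natural majorant at infinity is only $O(z^{-\alpha})$, which is not integrable for $\alpha\in(0,1)$, so dominated convergence does not apply directly there; the integral $\int_0^\infty u''(x-z)z^{-\alpha}\,dz$ is, like the right-hand side of \eqref{Int:Dalpha} itself, only improperly convergent for $u\in\mathcal C^2_b(\R)$, and the interchange should be justified on truncated intervals $\int_0^R$ followed by a limit (or by one more integration by parts). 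The paper's Fubini step suffers from the same borderline non-absolute convergence, so this is a shared informality rather than a defect specific to your argument.
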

\begin{proof}
 The first two identities have been obtained in \cite{cuestadiaz2023}. In order to obtain the third one, we first change the variable $z$ to $-z$, and then apply the fundamental theorem of calculus and Fubini:
\begin{align*}
&  \int_{-\infty}^{0}
  \frac{ u(x+z)-u(x)- u'(x) z}{ |z|^{2+\alpha}} dz
  = 
  \int_{-\infty}^{0}
  \frac{ \int_{0}^{z} u'(x+y) dy - u'(x)z }{ (-z)^{2+\alpha} } dz \\
  & \qquad = \int_{-\infty}^{0} \int_{0}^{z}
  \frac{ u'(x+y)- u'(x) }{ (-z)^{2+\alpha}} dy dz
  = \int_{-\infty}^{0} \int_{-\infty}^{y}
  \frac{u'(x)-u'(x+y)}{(-z)^{2+\alpha}} dz dy\\
  & \qquad = \frac{1}{1+\alpha} \int_{-\infty}^{0}
  \frac{u'(x)-u'(x+y)}{(-y)^{1+\alpha}} dy = \frac{1}{1+\alpha} \int^{\infty}_{0}
  \frac{u'(x)-u'(x-z)}{z^{1+\alpha}} dz,
\end{align*}
and the third identity follows after applying that $(-1-\alpha)\Gamma(-1-\alpha)=\Gamma(-\alpha)$. Finally, the fourth identity is proved in a similar way:
\begin{align*}
	\partial_x \Da[u](x) & = \frac{1}{\Gamma(-\alpha)}\int^{\infty}_{0} \frac{u'(x-z)-u'(x)}{z^{1+\alpha}}dz = -\frac{1}{\Gamma(-\alpha)}\int^{\infty}_{0} \frac{\int_0^zu''(x-y)dy}{z^{1+\alpha}}dz
	\cr
	& = -\frac{1}{\Gamma(-\alpha)}\int_0^\infty\int_y^\infty\frac{u''(x-y)}{z^{1+\alpha}}dzdy = \frac{1}{\Gamma(1-\alpha)}\int_0^\infty\frac{u''(x-y)}{y^\alpha}dy,
\end{align*}
and the identity follows after changing the integration variable $y$ by $z$.
\end{proof}
Now, we consider the operator $\barDa$, defined by means of
\begin{equation}
\label{e:barDax}
\barDa[u](x)= - \frac{1}{\Gamma(-\alpha)} \int_{0}^{\infty} \frac{u(x+z)-u(x)}{z^{1+\alpha}} dz, \quad \alpha\in(0,1),
\end{equation}
which is minus the left Weyl-Marchaud derivative of order $\alpha$ (see \cite{Marchaud,Weyl}). Then, we have the following representations:

\begin{lemma}[Equivalent representations of $\barDa$ and $\partial_x \barDa$]\label{equiv:repre:bar}
 Let $\alpha \in (0,1)$, $u \in\mathcal C_b^1(\R)$ and $x\in \R$. Then,
\begin{equation}\label{Int:Dalpha:bar}
\barDa[u](x) = \frac{1}{\Gamma(1-\alpha)} \int_{0}^{\infty} \frac{u'(x+z)}{z^\alpha} dz.
\end{equation}
Moreover, if $u\in \mathcal C^2_b(\R)$,
\begin{align}
\label{e:xbarDax}
\partial_x \barDa[u](x) &= \frac{1}{\Gamma(-1-\alpha)} \int_{0}^{\infty} \frac{u(x+z)-u(x)- u'(x)z}{z^{2+\alpha}} dz,
	\\
\partial_x \barDa[u](x) &	= -\frac{1}{\Gamma(-\alpha)}\int_0^\infty\frac{u'(x+z)-u'(x)}{z^{1+\alpha}}dz,
	\cr
\nonumber\partial_x \barDa[u](x) & = \frac{1}{\Gamma(1-\alpha)}\int_0^\infty\frac{u''(x+z)}{z^\alpha}dz.
\end{align}
\end{lemma}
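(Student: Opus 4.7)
The plan is to reduce the lemma to Lemma \ref{equiv:repre} via the reflection $v(y) := u(-y)$. Applying the change of variable $z \mapsto -z$ in the definition \eqref{e:Dax} of $\Da[v]$ evaluated at the point $-x$ gives
\[
\Da[v](-x) = \frac{1}{\Gamma(-\alpha)}\int_0^\infty \frac{v(-x-z)-v(-x)}{z^{1+\alpha}}dz = \frac{1}{\Gamma(-\alpha)}\int_0^\infty \frac{u(x+z)-u(x)}{z^{1+\alpha}}dz,
\]
which, by \eqref{e:barDax}, equals $-\barDa[u](x)$. The chain rule then yields $\partial_x \barDa[u](x) = (\partial_y\Da[v])(-x)$, so each of the four representations in Lemma \ref{equiv:repre} transfers to a representation for $\barDa$ or $\partial_x \barDa$.

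Concretely, I would substitute $v(-y)=u(y)$, $v'(-y) = -u'(y)$ and $v''(-y)=u''(y)$ into the right-hand sides of \eqref{Int:Dalpha} and of the three representations of $\partial_x \Da$ in Lemma \ref{equiv:repre}, evaluated at $y=-x$. After collecting the two sources of signs (one from the identity $\barDa[u](x) = -\Da[v](-x)$, and, in the second form of $\partial_x\barDa$, one from $v'(-y) = -u'(y)$), one recovers \eqref{Int:Dalpha:bar} and the three identities in \eqref{e:xbarDax}.

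As an alternative, one can mimic the direct arguments used in the proof of Lemma \ref{equiv:repre}: write $u(x+z)-u(x) = \int_0^z u'(x+y)\,dy$ (respectively $u(x+z)-u(x)-u'(x)z = \int_0^z (u'(x+y)-u'(x))\,dy$), substitute into \eqref{e:barDax} (respectively into the second-difference quotient \eqref{e:xbarDax}), apply Fubini, and close the computations using the Gamma-function identities $\Gamma(1-\alpha) = -\alpha\,\Gamma(-\alpha)$ and $\Gamma(-\alpha) = -(1+\alpha)\,\Gamma(-1-\alpha)$, exactly as done in the proof of the previous lemma. I do not foresee a genuine obstacle: the only delicate point is the justification of Fubini, which is harmless because $u\in\mathcal C^r_b(\R)$ guarantees integrability of the relevant integrands at infinity by boundedness, and at the origin by the Taylor cancellation $u(x+z)-u(x)-u'(x)z = O(z^2)$. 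The lemma is essentially a mirror image of Lemma \ref{equiv:repre}, so the main effort is bookkeeping the signs produced by the reflection and the chain rule.
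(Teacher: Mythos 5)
Your proposal is correct; the paper's own proof of this lemma is simply the remark that the argument is ``analogous to the proof of Lemma~\ref{equiv:repre}'', which is exactly your second alternative. Your primary route via the reflection $v(y)=u(-y)$, giving $\barDa[u](x)=-\Da[v](-x)$ and $\partial_x\barDa[u](x)=(\partial_y\Da[v])(-x)$, is a clean and correct way of making that analogy precise --- the sign bookkeeping you describe checks out for all four identities.
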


\begin{proof} Analogous to the proof of Lemma \ref{equiv:repre}.
\end{proof}

Note that, from \eqref{RF:01:integral}, \eqref{e:Dax} and \eqref{e:barDax}, it follows that
\begin{equation}
	\label{RF:01:integral:representation}
	\Dag[u](x) = \Gamma(-\alpha)\left(c_{\gamma,\alpha}^1\Da[u](x)
	- c_{\gamma,\alpha}^2 \barDa[u](x)\right), \quad \alpha\in(0,1);
\end{equation}
and, from \eqref{RF:integral}, \eqref{e:xDax} and \eqref{e:xbarDax},
\begin{align}
\label{RF:integral:representation}
\Dag[u](x) & = \Gamma(-\alpha)\left(c_{\gamma,\alpha}^1\partial_x\DD^{\alpha-1}[u](x)
+ c_{\gamma,\alpha}^2 \partial_x\overline{\DD^{\alpha-1}}[u](x)\right), \quad \alpha \in(1, 2).
\end{align}
It is now quite straightforward to check that \eqref{RF:01:integral:representation} and \eqref{RF:integral:representation} are indeed consistent with the definition by the Fourier symbol \eqref{RF:operator}-\eqref{RF:symbol}. In order to do it, we first need the Fourier symbol of $\Da$ (see, e.g., \cite[Chapter~7]{SKM}):
\begin{align*}
\FF\left( \Da[u](x) \right)(\xi) & = (i \xi)^\alpha  \FF(u)(\xi) = |\xi|^\alpha e^{i\sgn(\xi)\alpha\frac\pi2} \FF(u)(\xi) 
	\cr
& = -|\xi|^\alpha e^{-i\sgn(\xi)(2-\alpha)\frac\pi2} \FF(u)(\xi), \quad\alpha\in(0,1),
\end{align*}
and that of $\barDa$:
\begin{equation*}
\FF\left( \barDa[u](x) \right)(\xi) = -(-i\xi)^\alpha  \FF(u)(\xi) = -|\xi|^\alpha e^{-i\sgn(\xi)\alpha\frac\pi2} \FF(u)(\xi), \quad \alpha\in(0,1),
\end{equation*}
i.e., their Fourier symbols satisfy $(i\xi)^\alpha=- \overline{(-(-i\xi)^\alpha)}$, where the bar on the right-hand side denotes complex conjugation. We can observe that $\Da$ is not of Riesz-Feller type, because its symbol has order $\alpha\in(0,1)$ and skewness $\gamma=2-\alpha$, so it does not satisfy $|\gamma|\le\alpha$; however, $\barDa$ does belong to this class, since it has order $\alpha\in(0,1)$ and skewness $\gamma=\alpha$.

On the other hand,
\begin{align*}
\FF(\partial_x \Da[u])(\xi) & = (i\xi)\FF(\Da[u])(\xi) = (i\xi)^{1+\alpha} \FF(u)(\xi) = |\xi|^{1+\alpha}e^{i\sgn(\xi)(1+\alpha)\frac\pi2} \FF(u)(\xi) 
	\cr
& = -|\xi|^{1+\alpha}e^{-i\sgn(\xi)(1-\alpha)\frac\pi2} \FF(u)(\xi), \quad \alpha\in(0,1),
\end{align*}
which means that $\partial_x \Da$ is an operator of Riesz-Feller type with order $1+\alpha\in(1,2)$ and skewness $\gamma = 1-\alpha$. Moreover, $\partial_x\barDa$ is also of Riesz-Feller type:
\begin{align*}
\FF(\partial_x \barDa[u])(\xi) & = (i\xi)\FF(\barDa[u])(\xi) = (-i\xi)^{1+\alpha} \FF(u)(\xi) = |\xi|^{1+\alpha}e^{-i\sgn(\xi)(1+\alpha)\frac\pi2} \FF(u)(\xi) 
	\cr
	& = -|\xi|^{1+\alpha}e^{-i\sgn(\xi)(-1+\alpha)\frac\pi2} \FF(u)(\xi), \quad \alpha\in(0,1),
\end{align*}
so it has order $1 + \alpha\in(1,2)$ and skewness $\gamma = -1+\alpha$. Note that the Fourier symbols of $\FF(\partial_x \Da[u])(\xi)$ and $\FF(\partial_x \barDa[u])(\xi)$ satisfy $(i\xi)^{1+\alpha} = \overline{(-i\xi)^{1+\alpha}}$,

We observe also that, when $\alpha\not\in\mathbb Z$,
\begin{align}\label{check:symbol}
& \Gamma(-\alpha) \left[c_{\gamma,\alpha}^1 (i\xi)^\alpha + c_{\gamma,\alpha}^2 (-i\xi)^\alpha\right] 
	\cr
& \qquad = \Gamma(-\alpha) \left[\frac{\Gamma(1+\alpha)}{\pi}\sin\left( (\alpha-\gamma) \frac\pi2\right) (i\xi)^\alpha + \frac{\Gamma(1+\alpha)}{\pi}\sin\left( (\alpha+\gamma) \frac\pi2\right) (-i\xi)^\alpha\right]
	\cr
& \qquad = -\frac{|\xi|^\alpha}{\sin(\pi\alpha)} \Big[\sin\left( \alpha \frac\pi2\right)\cos\left( \gamma \frac\pi2\right)\left(e^{i\sgn(\xi)\alpha\frac\pi2} + e^{-i\sgn(\xi)\alpha\frac\pi2}\right) 
	\cr
& \qquad \qquad - \cos\left( \alpha \frac\pi2\right)\sin\left( \gamma \frac\pi2\right) \left(e^{i\sgn(\xi)\alpha\frac\pi2} - e^{-i\sgn(\xi)\alpha\frac\pi2}\right)\Big] = -|\xi|^\alpha e^{-i\sgn(\xi)\gamma\frac\pi2},
\end{align}
where we have used that $(i\xi)^\alpha = |\xi|^\alpha e^{i\sgn(\xi)\alpha\frac\pi2}$, and \eqref{Euler:R}, for $\omega = -\alpha$. Then, applying the Fourier transform $\FF$ in both sides of \eqref{RF:01:integral:representation} and bearing in mind all the previous arguments,
\begin{align*}
\FF(\Dag[u])(\xi) & = \Gamma(-\alpha)\left(c_{\gamma,\alpha}^1\FF(\Da[u])(\xi) - c_{\gamma,\alpha}^2 \FF(\barDa[u](\xi))\right)
	\cr
& = \Gamma(-\alpha)\left(c_{\gamma,\alpha}^1(i \xi)^\alpha  \FF(u)(\xi) + c_{\gamma,\alpha}^2 (-i\xi)^\alpha  \FF(u)(\xi)\right)
	\cr
& = -|\xi|^\alpha e^{-i\sgn(\xi)\gamma\frac\pi2}\FF(u)(\xi), \quad\alpha\in(0,1).
\end{align*}
Likewise, applying the Fourier transform $\FF$ in both sides of \eqref{RF:integral:representation},
\begin{align*}
\FF(\Dag[u])(\xi) & = \Gamma(-\alpha)\left(c_{\gamma,\alpha}^1\FF(\partial_x\DD^{\alpha-1}[u])(\xi) + c_{\gamma,\alpha}^2 \FF(\partial_x\overline{\DD^{\alpha-1}}[u])(\xi)\right)
	\cr
& = \Gamma(-\alpha)\left(c_{\gamma,\alpha}^1(i\xi)^{1+(\alpha-1)} \FF(u)(\xi) + c_{\gamma,\alpha}^2 (-i\xi)^{1+(\alpha-1)} \FF(u)(\xi)\right)
	\cr
& = -|\xi|^\alpha e^{-i\sgn(\xi)\gamma\frac\pi2}\FF(u)(\xi), \quad\alpha\in(1,2).
\end{align*}
Therefore, \eqref{RF:01:integral:representation} and \eqref{RF:integral:representation} are consistent with \eqref{RF:operator}-\eqref{RF:symbol}, when $\alpha\in(0,1)\cup(1,2)$.

From now on, we will adopt the representation formulas \eqref{RF:01:integral:representation} and \eqref{RF:integral:representation}, where $\Da[u](x)$ and $\barDa[u](x)$, for $\alpha\in(0,1)$, are given respectively by \eqref{Int:Dalpha} and \eqref{Int:Dalpha:bar}, and $\partial_x\DD^{\alpha-1}[u](x)$ and $\partial_x\overline{\DD^{\alpha-1}}[u](x)$, for $\alpha\in(1,2)$, are given respectively by \eqref{e:xDax} and \eqref{e:xbarDax}.

The structure of this paper is as follows. In Section \ref{s:DaDabarlambdak}, using complex analysis techniques, we express $\Da[\varphi_k](x)$, $\partial_x\Da[\varphi_k](x)$, $\barDa[\varphi_k](x)$ and $\partial_x\barDa[\varphi_k](x)$, for $k\in2\mathbb Z$, in terms of the Gaussian hypergeometric function ${}_2F_1$  (see, e.g., \cite{slater1966}), where
\begin{equation}
\label{e:phik}
\varphi_k(x) \equiv \left(\frac{ix - 1}{ix + 1}\right)^{k/2} \equiv \frac{(x + i)^k}{(1 + x^2)^{k/2}}.
\end{equation}
These functions arise naturally when considering the change of variable $x = \cot(s)$, with $s\in(0,\pi)$, because $\varphi_k(\cot(s)) = e^{iks}$, for all $k\in\mathbb Z$. Of particular relevance are the functions with $k$ even, that we denote $\lambda_k(x)$:
\begin{equation}
\label{e:lk}
\lambda_k(x) \equiv \varphi_{2k}(x) \equiv \left(\frac{ix - 1}{ix + 1}\right)^{k} \equiv \frac{(x + i)^{2k}}{(1 + x^2)^k}, \quad k\in\mathbb Z.
\end{equation}
These functions are known in the literature as the complex Higgins functions (see \cite{boyd1990,higgins,narayan}), and form a complete orthogonal system in $\mathcal L^2(\mathbb R)$ with weight $w(x) = 1/(\pi(1+x^2))$. Moreover, in \cite{cayamacuestadelahoz2020}, $(-\Delta)^{\alpha/2}\lambda_k(x)$ was expressed in terms of ${}_2F_1$. In this regard, by using complex variable techniques, we prove in this paper that, for all $k\in\mathbb Z$,
\begin{equation}
\label{e:dalphalkvsflalk}
\begin{split}
	\Da[\lambda_{k}](x) & \equiv e^{-i\sgn(k)\alpha\frac\pi2}(-\Delta)^{\alpha/2}\lambda_{k}(x), \quad\alpha\in(0,1),
	\cr
	\barDa[\lambda_{k}](x) & \equiv -e^{i\sgn(k)\alpha\frac\pi2}(-\Delta)^{\alpha/2}\lambda_{k}(x), \quad\alpha\in(0,1),
	\cr
	\partial_x\DD^{\alpha-1}[\lambda_{k}](x) & \equiv e^{-i\sgn(k)\alpha\frac\pi2}(-\Delta)^{\alpha/2}\lambda_{k}(x), \quad\alpha\in(1,2),
	\cr
	\partial_x\overline{\DD^{\alpha-1}}[\lambda_{k}](x) & \equiv e^{i\sgn(k)\alpha\frac\pi2}(-\Delta)^{\alpha/2}\lambda_{k}(x), \quad\alpha\in(1,2).
\end{split}
\end{equation}
These nontrivial identities have deep implications, because they enable us to compute, e.g., the Weyl-Marchaud derivatives of the so-called complex Christov functions $\mu_k(x) = (ix - 1)^k/(ix + 1)^{k+1}$ (see \cite{boyd1990,christov,narayan,wiener}), which we offer for the sake of completeness. More importantly, the identities in \eqref{e:dalphalkvsflalk} allow  to extend the results on $(-\Delta)^{\alpha/2}\lambda_{k}(x)$ to the Weyl-Marchaud derivatives of $\lambda_k(x)$. In particular, in \cite{cayamacuestadelahoz2021}, after applying the change of variable $x = \cot(s)$, with $s\in(0,\pi)$, $(-\Delta)^{\alpha/2}\varphi_{k}(x)$ was expressed in terms of $s$, for all $k\in\mathbb Z$, in a way that was very convenient from a numerical point of view. Therefore, bearing in mind \eqref{e:dalphalkvsflalk}, we give analogous formulas for the Weyl-Marchaud derivatives of $\lambda_k(x)$ in terms of $s$, which will enable us to compute numerically $\Da[\lambda_k](x)$, $\partial_x\Da[\lambda_k](x)$, $\barDa[\lambda_k](x)$ and $\partial_x\barDa[\lambda_k](x)$ efficiently and accurately. Nevertheless, the identities \eqref{e:dalphalkvsflalk} are not valid for $\varphi_k(x)$ with $k$ odd, as is shown in Section \ref{s:Dphik}.

Section \ref{s:RFln} is devoted to the Riesz-Feller operator $D_\gamma^\alpha$ acting on $\lambda_k(x)$, which is expressed in terms of ${}_2F_1$, as well as in terms of $s$, in an analogous way to that in \cite{cayamacuestadelahoz2021}. Moreover, explicit expressions for $D_\gamma^1[\varphi_k](x)$, with $k$ odd, are also obtained, and Riesz-Feller operators acting on $\mu_k(x)$ are briefly studied.

In Section \ref{s:numerical}, we develop a pseudospectral method analogous to that in \cite{cayamacuestadelahoz2021}, to compute the Weyl-Marchaud derivatives and the Riesz-Feller operators applied to bounded continuous functions $u(x)$, such that the limits of $u(x)$ coincide as $x\to\pm\infty$. Furthermore, we show how to apply the method also to functions $u(x)$, such that $\lim_{x\to-\infty}u(x) \not= \lim_{x\to\infty}u(x)$, and offer full Matlab \cite{matlab} codes that enable to reproduce the results. Moreover, we also test the codes to unbounded functions, obtaining coherent results. Finally, we simulate the evolution of the following Fisher-type nonlinear Riesz-Feller fractional diffusion equation:
\begin{equation}
	\label{e:fisher0}
u_t = D_\gamma^\alpha u + f(u), \quad x\in\mathbb R,\ t \ge 0,
\end{equation}
for $f(u) = u(1-u)$, and show numerically the appearance of traveling fronts whose speed grows exponentially, being able to capture correctly this behavior.

\section{Weyl-Marchaud derivatives of $\lambda_k(x)$}

\label{s:DaDabarlambdak}

First of all, we review some familiar concepts. Let $z\in\mathbb C$, then, the Pochhammer symbol is defined, when $n\in\mathbb N$ (where $\mathbb N$ denotes the set of natural numbers without $0$), as $(z)_n \equiv z(z+1)\ldots(z+n-1)$, and when $n = 0$, as $(z)_0\equiv1$. Therefore, if $z\in\mathbb Z^-\cup\{0\}$ (where $\mathbb Z^-$ denotes the set of negative integers), and $n > |z|$, then $(z)_n = 0$. Furthermore, when $z\not\in\mathbb Z^-\cup\{0\}$, it can be expressed in terms of the Gamma function:
\begin{equation*}
(z)_n=\frac{\Gamma(z+n)}{\Gamma(z)}, \quad n\in\mathbb N.
\end{equation*}
The Pochhammer symbol intervenes in many identities, e.g.,
\begin{equation*}
(z)_n = (-1)^n (-z-n+1)_n= (-1)^n \frac{\Gamma(-z+1)}{\Gamma(-z-n+1)}= n!(-1)^n \binom{-z}{n}, \quad n\in\mathbb N.
\end{equation*}
Moreover, it appears in the definition of hypergeometric functions, in particular, of the Gaussian hypergeometric function ${}_2F_1$:
\begin{equation}
\label{e:2F1}
{}_{2}F_1(a,b;c;z)=\sum_{n=0}^{\infty}\frac{(a)_n(b)_n}{(c)_n}\frac{z^n}{n!}.
\end{equation}
In this paper, we will mainly encounter the two following particular cases of ${}_2F_1$:
\begin{equation*}
{}_2F_1(-m, 1+\alpha; 1; z) = \sum_{n=0}^{m}\frac{(-m)_n(1+\alpha)_n}{(1)_n}\frac{z^n}{n!} = \sum_{n=0}^m\binom{m}{n}\binom{-1-\alpha}{n}z^n
\end{equation*}
and
\begin{align}\label{hyper:2}
{}_2F_1(-m, 1+\alpha; 2; z)  = \sum_{n=0}^{m}\frac{(-m)_n(1+\alpha)_n}{(2)_n}\frac{z^n}{n!} = -\frac{1}{\alpha}\sum_{n=0}^m\binom{m}{n}\binom{-\alpha}{n+1}z^n,
\end{align}
which can be written as finite sums, because $m\in\mathbb N\cup\{0\}$, so they pose no problems related to absolute convergence. Note that, when $m = 0$, ${}_2F_1(0, 1+\alpha; 1; z) = {}_2F_1(0, 1+\alpha; 2; z) = 1$.

The function ${}_2F_1$ appears in a multitude of places. For instance, the fractional Laplacian of the complex Higgins functions  $(-\Delta)^{\alpha/2}\lambda_k(x)$ can be expressed \cite[Th. 2.1]{cayamacuestadelahoz2020} in terms of ${}_2F_1$, for $\alpha\in(0,2)$, $k\in\mathbb{Z}$ and $x\in\mathbb{R}$:
\begin{equation}
\label{e:fraclap2F1}
(-\Delta)^{\alpha/2}\lambda_{k}(x) = -\frac{2|k|\Gamma(1+\alpha)}{(i\sgn(k)x+1)^{1+\alpha}}{}_2F_1\left(1 - |k|, 1 + \alpha; 2; \frac{2}{i\sgn(k)x+1}\right);
\end{equation}
note that, when $\alpha = 1$, \eqref{e:fraclap2F1} can be simplified to
\begin{equation}
\label{e:fraclapa1}
(-\Delta)^{1/2}\lambda_{k}(x) = i\sgn(k)\lambda_{k}'(x) = \frac{2|k|}{1+x^2}\lambda_k(x)  = \frac{2|k|}{1+x^2}\left(\frac{ix - 1}{ix + 1}\right)^{k} = \frac{2|k|(x+i)^{2k}}{(1+x^2)^{1+k}}.
\end{equation}
On the other hand, ${}_2F_1$ also enables us to express $\Da[\lambda_k] (x)$ and $\barDa[\lambda_k](x)$ in a compact way that closely resembles \eqref{e:fraclap2F1}, as proved in the following theorem.
\begin{theorem}\label{t:th1}
Let $\lambda_k(x)$ be defined as in \eqref{e:lk}, $\alpha\in(0,1)$, $k\in\mathbb{Z}$ and $x\in\mathbb{R}$. Then,
\begin{align}
\label{Dalpha:ln:F}
\Da[\lambda_{k}](x) & \equiv e^{-i\sgn(k)\alpha\frac\pi2}(-\Delta)^{\alpha/2}\lambda_{k}(x)
\cr
& = -\frac{2|k|\Gamma(1+\alpha)e^{-i\sgn(k)\alpha\frac\pi2}}{(i\sgn(k)x+1)^{1+\alpha}}{}_2F_1\left(1 - |k|, 1 + \alpha; 2; \frac{2}{i\sgn(k)x+1}\right),
\\
\label{Dalpha:ln:bar:F}
\barDa[\lambda_{k}](x) & \equiv -e^{i\sgn(k)\alpha\frac\pi2}(-\Delta)^{\alpha/2}\lambda_{k}(x)
\cr
& = \frac{2|k|\Gamma(1+\alpha)e^{i\sgn(k)\alpha\frac\pi2}}{(i\sgn(k)x+1)^{1+\alpha}}{}_2F_1\left(1 - |k|, 1 + \alpha; 2; \frac{2}{i\sgn(k)x+1}\right).
\end{align}
Moreover, when $\alpha\in(1,2)$, $k\in\mathbb{Z}$ and $x\in\mathbb{R}$,
\begin{align}
	\label{Dalpha:ln:Fa12}
	\partial_x\DD^{\alpha-1}[\lambda_{k}](x) & \equiv e^{-i\sgn(k)\alpha\frac\pi2}(-\Delta)^{\alpha/2}\lambda_{k}(x)
	\cr
	& = -\frac{2|k|\Gamma(1+\alpha)e^{-i\sgn(k)\alpha\frac\pi2}}{(i\sgn(k)x+1)^{1+\alpha}}{}_2F_1\left(1 - |k|, 1 + \alpha; 2; \frac{2}{i\sgn(k)x+1}\right),
	\\
	\label{Dalpha:ln:bar:Fa12}
	\partial_x\overline{\DD^{\alpha-1}}[\lambda_{k}](x) & \equiv e^{i\sgn(k)\alpha\frac\pi2}(-\Delta)^{\alpha/2}\lambda_{k}(x)
	\cr
	& = -\frac{2|k|\Gamma(1+\alpha)e^{i\sgn(k)\alpha\frac\pi2}}{(i\sgn(k)x+1)^{1+\alpha}}{}_2F_1\left(1 - |k|, 1 + \alpha; 2; \frac{2}{i\sgn(k)x+1}\right).
\end{align}

\end{theorem}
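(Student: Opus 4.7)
The strategy is to reduce every claim to the closed-form evaluation \eqref{e:fraclap2F1} of $(-\Delta)^{\alpha/2}\lambda_k(x)$: each of the four ${}_2F_1$ expressions in \eqref{Dalpha:ln:F}--\eqref{Dalpha:ln:bar:Fa12} is precisely the right-hand side of \eqref{e:fraclap2F1} multiplied by the relevant phase $\pm e^{\pm i\sgn(k)\alpha\pi/2}$, so it suffices to establish the four ``$\equiv$'' identities \eqref{e:dalphalkvsflalk}. The case $k=0$ is trivial ($\lambda_0\equiv 1$, and all operators involved annihilate constants), and the symmetry $\lambda_{-k}(x)=\overline{\lambda_k(x)}$ on $\R$ reduces $k<0$ to $k>0$ by complex conjugation (reversing the sign of $\sgn(k)$), so I restrict to $k\geq 1$.

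For such $k$, the central observation is that $\lambda_k(z)=((iz-1)/(iz+1))^k$ is a rational function whose only singularity is a pole of order $k$ at $z=i$; hence $\lambda_k$ is holomorphic on the closed lower half plane, with $\lambda_k(z)-1=O(|z|^{-1})$ and $\lambda_k'(z)=O(|z|^{-2})$ as $|z|\to\infty$. Starting from \eqref{Int:Dalpha}, I would extend $\Da[\lambda_k](x)=\Gamma(1-\alpha)^{-1}\int_0^\infty \lambda_k'(x-t)\,t^{-\alpha}\,dt$ to complex $t$ (principal branch of $t^{-\alpha}$, cut on $(-\infty,0]$) and rotate the contour from the positive real axis to the positive imaginary axis by Cauchy's theorem on a quarter annulus. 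In the open first quadrant the integrand is holomorphic, since $x-t$ remains in the open lower half plane away from $z=i$; the outer arc contributes nothing as its radius tends to infinity (by $\lambda_k'=O(|z|^{-2})$) and the inner arc contributes nothing as its radius tends to $0$ (by $\alpha<1$). Setting $t=is$ produces the phase $e^{i(1-\alpha)\pi/2}$ and yields
\[
\Da[\lambda_k](x) = \frac{e^{i(1-\alpha)\pi/2}}{\Gamma(1-\alpha)}\int_0^\infty \lambda_k'(x-is)\,s^{-\alpha}\,ds,
\]
while the symmetric rotation to the negative imaginary axis applied to \eqref{Int:Dalpha:bar} gives
\[
\barDa[\lambda_k](x) = \frac{e^{-i(1-\alpha)\pi/2}}{\Gamma(1-\alpha)}\int_0^\infty \lambda_k'(x-is)\,s^{-\alpha}\,ds.
\]
Subtracting these two identities and using the general relation $\Da-\barDa=2\cos(\alpha\pi/2)(-\Delta)^{\alpha/2}$, which follows directly from \eqref{e:Dax}, \eqref{e:barDax}, \eqref{e:fraclap} and Euler's reflection formula \eqref{Euler:R}, identifies the common integral as $-i\,\Gamma(1-\alpha)(-\Delta)^{\alpha/2}\lambda_k(x)$. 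Plugging this back into the two displayed formulas gives the equivalences $\Da[\lambda_k]=e^{-i\alpha\pi/2}(-\Delta)^{\alpha/2}\lambda_k$ and $\barDa[\lambda_k]=-e^{i\alpha\pi/2}(-\Delta)^{\alpha/2}\lambda_k$, and \eqref{e:fraclap2F1} then supplies the ${}_2F_1$ formulas \eqref{Dalpha:ln:F}--\eqref{Dalpha:ln:bar:F}.

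The cases $\alpha\in(1,2)$ require no further contour work. By \eqref{e:fraclapa1}, $\partial_x\lambda_k=-i\sgn(k)(-\Delta)^{1/2}\lambda_k=e^{-i\sgn(k)\pi/2}(-\Delta)^{1/2}\lambda_k$, and since $\partial_x$ commutes with the Fourier multiplier $(-\Delta)^{(\alpha-1)/2}$, we obtain $\partial_x(-\Delta)^{(\alpha-1)/2}\lambda_k=e^{-i\sgn(k)\pi/2}(-\Delta)^{\alpha/2}\lambda_k$. Combining this with the already-proved $\DD^{\alpha-1}[\lambda_k]=e^{-i\sgn(k)(\alpha-1)\pi/2}(-\Delta)^{(\alpha-1)/2}\lambda_k$ and its $\overline{\DD^{\alpha-1}}$ analogue (the $(0,1)$ case with $\alpha$ replaced by $\alpha-1$) yields \eqref{Dalpha:ln:Fa12} and \eqref{Dalpha:ln:bar:Fa12}, the sign adjustment in the latter being absorbed via $e^{\pm i\sgn(k)\pi}=-1$. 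The main obstacle will be the rigorous justification of the contour rotation: convergence on each contour, holomorphy in the chosen sector, the correct principal branch of $t^{-\alpha}$, and the vanishing of the quarter-arc contributions at $0$ and at $\infty$ must all be spelled out using the pointwise decay of $\lambda_k$ and $\lambda_k'$ in the lower half plane, but no serious analytic difficulty is expected.
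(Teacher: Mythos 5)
Your proposal is correct, and it takes a genuinely different route from the paper's proof. For $\alpha\in(0,1)$ the paper substitutes $\lambda_k'$ into \eqref{Int:Dalpha} and \eqref{Int:Dalpha:bar} and evaluates the two integrals \emph{explicitly}: it integrates $g_k^j(z;x)=z^{-\alpha}(z\mp x-i)^{k-1}(z\mp x+i)^{-k-1}$ over a keyhole contour around the branch cut $(-\infty,0]$, computes the residues at the order-$(k+1)$ poles $z=x-i$ and $z=i-x$ via the general Leibniz rule, recognizes the resulting sums as the finite hypergeometric series \eqref{hyper:2}, and only afterwards observes that the answers are $e^{\mp i\alpha\pi/2}$ times \eqref{e:fraclap2F1}. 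You instead never evaluate the integrals: a quarter-plane rotation of the half-line $t>0$ onto $\pm it$ (legitimate because $\lambda_k'$ is holomorphic and $O(|z|^{-2})$ on the closed lower half-plane for $k\ge1$, and the arcs vanish since $0<\alpha<1$) shows that $\Da[\lambda_k]$ and $\barDa[\lambda_k]$ are the phases $e^{\pm i(1-\alpha)\pi/2}$ times one and the same integral, and the elementary identity $\Da-\barDa=2\cos(\alpha\tfrac\pi2)(-\Delta)^{\alpha/2}$ (which does follow from \eqref{e:Dax}, \eqref{e:barDax}, \eqref{e:fraclap}, \eqref{e:ca} and \eqref{Euler:R}, using $\cos(\alpha\tfrac\pi2)\neq0$) then identifies that integral as $-i\Gamma(1-\alpha)(-\Delta)^{\alpha/2}\lambda_k$; the ${}_2F_1$ formulas are then imported wholesale from \eqref{e:fraclap2F1}. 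Your version is shorter, avoids residue bookkeeping, and explains \emph{where} the phases $e^{\mp i\sgn(k)\alpha\pi/2}$ come from (they are rotation phases); the paper's version is more self-contained in that it rederives the explicit closed form rather than quoting it. The reduction of $k<0$ to $k>0$ by conjugation is the same in both. For $\alpha\in(1,2)$ the paper simply differentiates the finite ${}_2F_1$ sum termwise using $(n+\alpha)(\alpha)_n=\alpha(1+\alpha)_n$, whereas you invoke $\partial_x\lambda_k=e^{-i\sgn(k)\pi/2}(-\Delta)^{1/2}\lambda_k$ together with the commutation of $\partial_x$ with $(-\Delta)^{(\alpha-1)/2}$ and the semigroup property $(-\Delta)^{(\alpha-1)/2}(-\Delta)^{1/2}=(-\Delta)^{\alpha/2}$; the phase arithmetic checks out, but that semigroup step is the one place you should add a justification, since $\lambda_k$ does not decay at infinity — the cleanest fix is to verify it directly from the explicit formula \eqref{e:fraclap2F1}, at which point your argument essentially collapses into the paper's termwise differentiation.
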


\begin{proof}
We first assume $\alpha\in(0,1)$ and $k\in\mathbb N$. The derivative of \eqref{e:lk} is
$$
\lambda_k'(x) = -\frac{2ik}{(x - i)^2}\left(\frac{x + i}{x - i}\right)^{k-1}.
$$
Substituting $u'$ by this expression on the right-hand sides of \eqref{Int:Dalpha} and \eqref{Int:Dalpha:bar}, we get
\begin{align}
\label{Dalpha:ln}\Da[\lambda_k] (x)=-\frac{2ik}{\Gamma(1-\alpha)}\int^{\infty}_{0} \frac{(z-x - i)^{k-1}}{z^\alpha(z-x + i)^{k+1}} dz = -\frac{2ik}{\Gamma(1-\alpha)}\int^{\infty}_{0} g_k^0(z;x) dz,
	\\
\label{Dalpha:ln:bar}\barDa[\lambda_k](x) = -\frac{2ik}{\Gamma(1-\alpha)} \int_{0}^{\infty} \frac{(z+x + i)^{k-1}}{ z^\alpha(z+x - i)^{k+1} }dz = -\frac{2ik}{\Gamma(1-\alpha)} \int_{0}^{\infty}g_k^1(z;x)dz,
\end{align}
where we denote
$$
g_k^0(z;x)=\frac{(z-x - i)^{k-1}}{z^\alpha(z-x + i)^{k+1}}, \qquad g_k^1(z;x)=\frac{(z+x + i)^{k-1}}{z^\alpha(z+x - i)^{k+1}};
$$
observe that, in this notation, we regard $x$ as a parameter, rather than as an independent variable.

The integrals in \eqref{Dalpha:ln} and \eqref{Dalpha:ln:bar} can be computed by standard contour integration. More precisely, we integrate $g_k^j(z;x)$, for $j \in\{0, 1\}$, along certain integration contours on $\mathbb{C}$, and use Cauchy's integral theorem. Since $z^{\alpha} = e^{\alpha\ln(z)} = e^{\alpha(\ln(|z|) + i\arg(z))}$, $z^{\alpha}$ has a branch cut. In what follows, we consider the principal branch of the logarithm, which corresponds to $-\pi < \arg(z) \le \pi$; in particular, $(-1)^\alpha = e^{i\pi\alpha}$, unless the branch cut is crossed. The branch choice determines also how we choose the contours.

In Figure \ref{f:contour}, we have depicted one such contour $C$, which consists of four parts, i.e., $C = C_1 \cup C_R\cup C_2\cup C_r$, avoids the branch cut, but encloses the poles $z = i - x$ and $z = x - i$, for a given $x\in\mathbb R$. Then, by the residue theorem, the integral along it is equal to the sum of the residues. The pieces of the contour that run parallel to the branch cut will give the approximation of the integral from $0$ to $\infty$; the other pieces will give integrals that tend to zero, when $C$ tends to one contour that encloses $\mathbb{C}$, except for the brach cut. More precisely, for every $x\in\mathbb{R}$, we take $r$ and $R$ (i.e., the radii of $C_r$ and $C_R$, respectively), such that $r = 1/R$ and $0<r\ll (1+|x|^2)^{1/2}\ll R$. Moreover, we also take angles $\theta_1,\theta_2\in(\pi/2,\pi)$, such that $\delta = r\sin(\theta_1) = R\sin(\theta_2)$. In fact, if we fix, e.g, $\theta_1$, then the whole contour can be unequivocally determined by the value of $R$. The parameterization of the parts of $C$ is as follows:
\begin{align*}
  C_1 & = \{ -y-i\delta : y\in (-r\cos(\theta_1),-R\cos(\theta_2))\},\\
  C_R & = \{Re^{\theta i}: \theta \in (-\theta_2,\theta_2)\},\\
  C_2 & = \{ y+i\delta : y\in (R\cos(\theta_2),r\cos(\theta_1))\},\\
  C_r & = \{ re^{-\theta i}: \theta \in (-\theta_1,\theta_1)\}.
\end{align*}
Hence, for both $j=0$ and $j = 1$,
\begin{equation}
\label{e:intgg}
\int_C g_k^j(z;x)dz  = \int_{C_1}g_k^j(z;x)dz + \int_{C_R}g_k^j(z;x)dz + \int_{C_2}g_k^j(z;x)dz + \int_{C_r}g_k^j(z;x)dz.
\end{equation}
On the other hand, by Cauchy's residue theorem, we have:
\begin{equation}
\label{e:cauchy}
\begin{aligned}
\int_C g_k^0(z;x)dz & = 2\pi i \Res(g_k^0(z;x),x-i),
  \cr
\int_C g_k^1(z;x)dz & = 2\pi i \Res(g_k^1(z;x),i-x).
\end{aligned}
\end{equation}

\begin{figure}
	\centering
	\begin{tikzpicture}
		\centerarc[thick,->,blue](0,0)(158.2:-158.2:0.5cm);
		\filldraw[->, blue, thick] (-3, 0.191342) -- (-0.441940, 0.191342);
		\filldraw[->, blue, thick] (-0.451940, -0.191342) -- (-3, -0.191342);
		\centerarc[thick,<-,blue](0,0)(176.53:-176.53:3cm);
		\draw (0,-3.5) -- (0, 3.5);
		\draw (-3.5, 0) -- (3.5, 0);
		\draw(2.8,1.9) node{\scriptsize $C_R$};
		\draw(-2, 0.5) node{\scriptsize $C_2$};
		\draw(0.65,0.45) node{\scriptsize $C_r$};
		\draw(-2, -0.5) node{\scriptsize $C_1$};
		\draw(1.4142, -1.4142)  node{ \scriptsize $x-i$};
		\draw(-1.4142, 1.4142) node{\scriptsize $i-x$};
		\draw(2.88, 2.98) node{\scriptsize $x>0$};
		\foreach \Point in {(-1.06066, 1.06066) ,(1.06066, -1.06066) }{
			\node at \Point {\textbullet};}
	\end{tikzpicture}
	\caption{An integration contour example, with the two poles $z = i - x$ and $z = x + i$, corresponding to a given $x>0$.}
	\label{f:contour}
\end{figure}
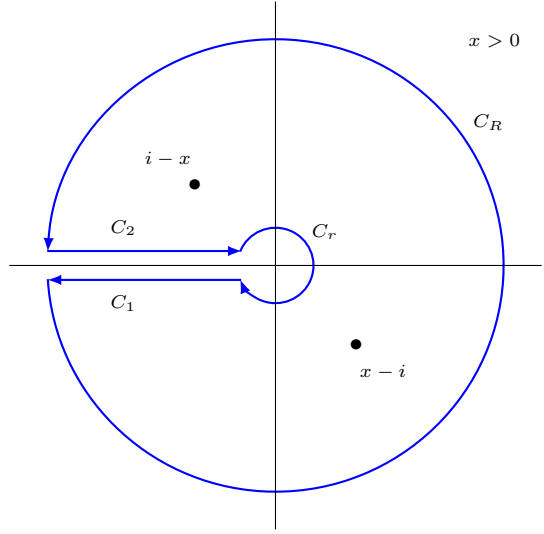

\noindent In order to compute the residues of $g_k^j(z;x)$ at $z = i - x$ and $z = x - i$, respectively, we use the general Leibniz rule:
$$
\frac{d^k}{dz^k}[p(z)q(z)] = \sum_{n=0}^{k}\binom{k}{n}p^{(n)}(z)q^{(k-n)}(z).
$$
Thus, we obtain
\begin{align}
\label{e:resi-x}
&\Res(g_k^1(z;x),i-x)  =
\left.\frac{1}{k!}\frac{d^k}{dz^k}z^{-\alpha}(z+x+i)^{k-1}\right|_{z = i-x}
\cr
& \qquad= \left.\frac{1}{k!}\sum_{n = 1}^k\binom{k}{n}\left[n!\binom{-\alpha}{n}z^{-n-\alpha}\right]\left[\frac{(k-1)!}{(k-1-(k-n))!}(z+x+i)^{k-1-(k-n)}\right]\right|_{z = i-x}
\cr
&\qquad = \sum_{n = 1}^k\binom{k-1}{n-1}\binom{-\alpha}{n}(i-x)^{-n-\alpha}(2i)^{n-1}
\cr
&\qquad = -\frac{\alpha e^{-i\pi(1+\alpha)/2}}{(ix+1)^{1+\alpha}}{}_2F_1\left(1 - k, 1 + \alpha; 2; \frac{2}{ix+1}\right),
\end{align}
where we have rewritten the sum, so $n$ goes from $0$ to $k - 1$ in the sum, and applied \eqref{hyper:2}. Likewise, we have
\begin{align}
\label{e:resx-i}
\Res(g_k^0(z;x),x-i) & = \sum_{n = 1}^k\binom{k-1}{n-1}\binom{-\alpha}{n}(x-i)^{-n-\alpha}(-2i)^{n-1}
	\cr
& = -\frac{\alpha e^{i\pi(1+\alpha)/2}}{(ix+1)^{1+\alpha}}{}_2F_1\left(1 - k, 1 + \alpha; 2; \frac{2}{ix+1}\right).
\end{align}
In order to compute \eqref{e:intgg}, we observe that the second and fourth integrals tend to zero, as $R\to\infty$ (this can be easily seen by changing to polar coordinates and recalling that $r=1/R$). In what regards the first and third integrals, we have that $C_1$ tends to $(-\infty+i\,0^-,0)$, parameterized by $-y+i\,0^-$, with $y\in(0,\infty)$, and $C_2$ tends to $(-\infty+i\,0^+,0)$, parameterized by $y+i\,0^+$, with $y\in(-\infty,0)$; note that the notation $+i\,0^+$ and $+i\,0^-$ allows us to indicate correctly the argument of the complex numbers lying on the negative real axis, which is $-\pi$ on $C_1$, and $\pi$ on $C_2$. Hence,
\begin{align*}
\lim_{R\to \infty}\int_{C_1}g_k^j(z;x)dz & = -\int_{0}^\infty g_k^j(-y + i\,0^-;x)dy = -e^{i\pi\alpha}\int_{0}^\infty g_k^{1-j}(y;x) dy,
	\cr
\lim_{R\to \infty}\int_{C_2}g_k^j(z;x)dz & = \int_{-\infty}^0 g_k^j(y + i\,0^+;x)dy = \int_0^{\infty} g_k^j(-y + i\,0^+;x)dy 
 = e^{-i\pi\alpha}\int_{0}^\infty g_k^{1-j}(y;x) dy.
\end{align*}
Therefore, 
\begin{align*}
\int_C g_k^j(z;x)dz & = \lim_{R\to\infty}\int_{C_1}g_k^j(z;x)dz + \lim_{R\to\infty}\int_{C_2}g_k^j(z;x)dz
	\cr
& = (-e^{i\pi\alpha} + e^{-i\pi\alpha})\int_{0}^\infty g_k^{1-j}(y;x)dy = -2i\sin(\pi\alpha)\int_{0}^\infty g_k^{1-j}(y;x)dy,
\end{align*}
i.e.,
\begin{equation*}
\int_{0}^\infty g_k^j(y;x)dy = \frac{i}{2\sin(\pi\alpha)}\int_C g_k^{1-j}(z;x)dz.
\end{equation*}
Taking $j \in\{0, 1\}$, from \eqref{e:cauchy}, together with \eqref{e:resi-x} and \eqref{e:resx-i}, respectively, we get
\begin{align*}
\int_{0}^\infty g_k^0(y;x)dy & = \frac{i}{2\sin(\pi\alpha)}\int_C g_k^{1}(z;x)dz = -\frac{\pi}{\sin(\pi\alpha)}\Res(g_k^1(z;x),x-i)
\cr
& = \frac{\pi\alpha e^{-i\pi(1+\alpha)/2}}{\sin(\pi\alpha)(ix+1)^{1+\alpha}}{}_2F_1\left(1 - k, 1 + \alpha; 2; \frac{2}{ix+1}\right),
\cr
\int_{0}^\infty g_k^1(y;x)dy & = \frac{i}{2\sin(\pi\alpha)}\int_C g_k^{0}(z;x)dz = -\frac{\pi}{\sin(\pi\alpha)}\Res(g_k^0(z;x),i-x)
\cr
& = \frac{\pi\alpha e^{i\pi(1+\alpha)/2}}{\sin(\pi\alpha)(ix+1)^{1+\alpha}}{}_2F_1\left(1 - k, 1 + \alpha; 2; \frac{2}{ix+1}\right).
\end{align*}
Then, from \eqref{Dalpha:ln} and \eqref{Dalpha:ln:bar}, respectively,
\begin{align}
\label{e:dalkpos}
\Da[\lambda_k](x) & = -\frac{2k\Gamma(1+\alpha)e^{-i\alpha\frac\pi2}}{(ix+1)^{1+\alpha}}{}_2F_1\left(1 - k, 1 + \alpha; 2; \frac{2}{ix+1}\right),
	\\
\label{e:dabarlkpos}
\barDa[\lambda_k](x) & = \frac{2k\Gamma(1+\alpha)e^{i\alpha\frac\pi2}}{(ix+1)^{1+\alpha}}{}_2F_1\left(1 - k, 1 + \alpha; 2; \frac{2}{ix+1}\right),
\end{align}
where we have used Euler's reflection formula \eqref{Euler:R}. The case with $k\in\mathbb Z^-$ follows from the symmetry $\lambda_{k}(x) \equiv \overline{\lambda_{-k}(x)}$. Hence, from \eqref{Int:Dalpha} and \eqref{Int:Dalpha:bar}, respectively,
$$
\Da[\lambda_{k}](x) \equiv \Da\Big[\overline{\lambda_{-k}}\Big](x) \equiv \overline{\Da[\lambda_{-k}](x)}, \qquad \barDa[\lambda_{k}](x) \equiv \barDa\Big[\overline{\lambda_{-k}}\Big](x)  \equiv \overline{\barDa[\lambda_{-k}](x)},
$$
for all $k\in\mathbb Z^-$. Therefore, from \eqref{e:dalkpos} and \eqref{e:dabarlkpos}, respectively,
\begin{align*}
\Da[\lambda_{k}](x) & = \frac{2k\Gamma(1+\alpha)e^{i\alpha\frac\pi2}}{(-ix+1)^{1+\alpha}}{}_2F_1\left(1 + k, 1 + \alpha; 2; \frac{2}{-ix+1}\right), \quad k\in\mathbb Z^-,
\cr
\barDa[\lambda_{k}](x) & = -\frac{2k\Gamma(1+\alpha)e^{-i\alpha\frac\pi2}}{(-ix+1)^{1+\alpha}}{}_2F_1\left(1 + k, 1 + \alpha; 2; \frac{2}{-ix+1}\right), \quad k\in\mathbb Z^-.
\end{align*}
Putting the cases with $k\in\mathbb Z^+$ and with $k\in\mathbb Z^-$ together,
\begin{align*}
\Da[\lambda_{k}](x) & = -\frac{2|k|\Gamma(1+\alpha)e^{-i\sgn(k)\alpha\frac\pi2}}{(i\sgn(k)x+1)^{1+\alpha}}{}_2F_1\left(1 - |k|, 1 + \alpha; 2; \frac{2}{i\sgn(k)x+1}\right),
	\cr
\barDa[\lambda_{k}](x) & = \frac{2|k|\Gamma(1+\alpha)e^{i\sgn(k)\alpha\frac\pi2}}{(i\sgn(k)x+1)^{1+\alpha}}{}_2F_1\left(1 - |k|, 1 + \alpha; 2; \frac{2}{i\sgn(k)x+1}\right).
\end{align*}
Then, bearing in mind \eqref{e:fraclap2F1}, this concludes the proof of \eqref{Dalpha:ln:F} and \eqref{Dalpha:ln:bar:F}.

Let us consider now the case with $\alpha\in(1,2)$. Then,
\begin{align*}
& \partial_x\DD^{\alpha-1}[\lambda_{k}](x) = \frac{d}{dx}\left[-\frac{2|k|\Gamma(\alpha)e^{-i\sgn(k)\pi(\alpha-1)/2}}{(i\sgn(k)x+1)^{\alpha}}{}_2F_1\left(1 - |k|, \alpha; 2; \frac{2}{i\sgn(k)x+1}\right)\right]
	\cr
& \qquad = -2ik\Gamma(\alpha)e^{-i\sgn(k)\alpha\frac\pi2}\frac{d}{dx}\sum_{n=0}^{|k|-1}\frac{(1-|k|)_n(\alpha)_n}{(2)_n}\frac{2^n(i\sgn(k)x+1)^{-n-\alpha}}{n!}
	\cr
& \qquad = -2|k|\Gamma(\alpha)e^{-i\sgn(k)\alpha\frac\pi2}\sum_{n=0}^{|k|-1}\frac{(1-|k|)_n(n+\alpha)(\alpha)_n}{(2)_n}\frac{2^n(i\sgn(k)x+1)^{-1-n-\alpha}}{n!}
	\cr
& \qquad = -\frac{2|k|\Gamma(1+\alpha)e^{-i\sgn(k)\alpha\frac\pi2}}{(i\sgn(k)x+1)^{1+\alpha}}{}_2F_1\left(1 - |k|, 1 + \alpha; 2; \frac{2}{i\sgn(k)x+1}\right),
\end{align*}
where we have used that $(n+\alpha)(\alpha)_n \equiv \alpha(1+\alpha)_n$. Then, bearing in mind \eqref{e:fraclap2F1}, this concludes the proof of \eqref{Dalpha:ln:Fa12}. The proof of \eqref{Dalpha:ln:bar:Fa12} is identical: we just replace $-e^{-i\sgn(k)\pi(\alpha-1)/2} = -i\sgn(k)e^{-i\sgn(k)\pi\alpha/2}$ by $e^{i\sgn(k)\pi(\alpha-1)/2} = -i\sgn(k)e^{i\sgn(k)\pi\alpha/2}$.

\end{proof}

Theorem \ref{t:th1} allows to compute immediately the Weyl-Marchaud derivatives of the so-called complex Christov functions \cite{boyd1990,christov,narayan,wiener}, which we offer here for the sake of completeness. These functions are defined as
\begin{equation}
\label{e:muk}
\mu_k(x) \equiv \frac{(ix - 1)^k}{(ix + 1)^{k+1}} \equiv \dfrac{\lambda_{k}(x)-\lambda_{k+1}(x)}{2}, \quad k\in\mathbb Z,
\end{equation}
and form a complete orthogonal system in $\mathcal L^2(\mathbb R)$. Moreover, they are closely related Malmquist-Takenaka functions \cite{Malmquist1926,Takenaka1925}, which differ from the Christov functions in a scaling and a constant factor.

In order to compute the Weyl-Marchaud derivatives of $\mu_k(x)$, we recall the expression of their fractional Laplacian \cite[Prop. 3.1]{cayamacuestadelahoz2020}, for $\alpha\in(0,2)$, $k\in\mathbb{Z}$ and $x\in\mathbb{R}$:
\begin{equation}
\label{e:fraclapmuk}
	(-\Delta)^{\alpha/2}\mu_k(x) =
	\left\{
	\begin{aligned}
		& \frac{\Gamma(1+\alpha)}{(ix+1)^{1+\alpha}}{}_2F_1\left(-k,1+\alpha; 1;\dfrac{2}{ix+1}\right), & & k\in\mathbb N\cup\{0\},
		\\
		& {-\frac{\Gamma(1+\alpha)}{(-ix+1)^{1+\alpha}}}{}_2F_1\left(1+k,1+\alpha; 1;\dfrac{2}{-ix+1}\right), & & k\in\mathbb Z^-.
	\end{aligned}
	\right.
\end{equation}
Then, the following corollary follows from Theorem \ref{t:th1} and \eqref{e:muk}.
\begin{corollary}

\label{cor:Damuk}

Let $\mu_k(x)$ be defined as in \eqref{e:muk},  $k\in\mathbb{Z}$ and $x\in\mathbb{R}$. Then,
\begin{equation}
	\label{e:dalphamukvsflamuk}
	\begin{split}
		\Da[\mu_{k}](x) & \equiv e^{-i\sgn(k)\alpha\frac\pi2}(-\Delta)^{\alpha/2}\mu_{k}(x), \quad\alpha\in(0,1),
		\cr
		\barDa[\mu_{k}](x) & \equiv -e^{i\sgn(k)\alpha\frac\pi2}(-\Delta)^{\alpha/2}\mu_{k}(x), \quad\alpha\in(0,1),
		\cr
		\partial_x\DD^{\alpha-1}[\mu_{k}](x) & \equiv e^{-i\sgn(k)\alpha\frac\pi2}(-\Delta)^{\alpha/2}\mu_{k}(x), \quad\alpha\in(1,2),
		\cr
		\partial_x\overline{\DD^{\alpha-1}}[\mu_{k}](x) & \equiv e^{i\sgn(k)\alpha\frac\pi2}(-\Delta)^{\alpha/2}\mu_{k}(x), \quad\alpha\in(1,2),
	\end{split}
\end{equation}
where $(-\Delta)^{\alpha/2}\mu_k(x)$ is given by \eqref{e:fraclapmuk}.

\end{corollary}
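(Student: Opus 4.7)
The plan is to reduce the statement to Theorem \ref{t:th1} by exploiting the linear decomposition $\mu_k(x) = (\lambda_k(x) - \lambda_{k+1}(x))/2$ provided by \eqref{e:muk}. Since all four operators $\Da$, $\barDa$, $\partial_x\DD^{\alpha-1}$, $\partial_x\overline{\DD^{\alpha-1}}$ are linear, I can write, for example,
\begin{equation*}
\Da[\mu_k](x) = \tfrac12\bigl(\Da[\lambda_k](x) - \Da[\lambda_{k+1}](x)\bigr),
\end{equation*}
and similarly for the other three operators and for $(-\Delta)^{\alpha/2}\mu_k$. So the task is to match phases on the two sides.

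In the generic case where $k$ and $k+1$ have the same sign, i.e.\ $k\ge 1$ or $k\le -2$, one has $\sgn(k)=\sgn(k+1)$, so the prefactor $e^{\mp i\sgn(k)\alpha\pi/2}$ in Theorem \ref{t:th1} is common to both $\lambda_k$ and $\lambda_{k+1}$ and can be pulled out of the half-difference. Reassembling $(-\Delta)^{\alpha/2}\mu_k$ from its $\lambda$-components then gives the claimed identities \eqref{e:dalphamukvsflamuk} directly, both in the range $\alpha\in(0,1)$ using \eqref{Dalpha:ln:F}--\eqref{Dalpha:ln:bar:F} and in the range $\alpha\in(1,2)$ using \eqref{Dalpha:ln:Fa12}--\eqref{Dalpha:ln:bar:Fa12}.

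The only step requiring care is the transition $k\in\{-1,0\}$, where $\sgn(k)\ne\sgn(k+1)$. Here I would use the trivial observation that $\lambda_0(x)\equiv 1$ and hence, as noted at the start of the paper, $\Da[\lambda_0]=\barDa[\lambda_0]=\partial_x\DD^{\alpha-1}[\lambda_0]=\partial_x\overline{\DD^{\alpha-1}}[\lambda_0]=0$ and $(-\Delta)^{\alpha/2}\lambda_0=0$ for $\alpha>0$. Thus for $k=0$ the decomposition collapses to $\Da[\mu_0]=-\Da[\lambda_1]/2=-e^{-i\alpha\pi/2}(-\Delta)^{\alpha/2}\lambda_1/2=e^{-i\alpha\pi/2}(-\Delta)^{\alpha/2}\mu_0$, matching \eqref{e:dalphamukvsflamuk} with the convention $\sgn(0)=1$ (which is consistent with the positive-$k$ branch of \eqref{e:fraclapmuk} used for $\mu_0$). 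The case $k=-1$ is analogous and yields $\sgn(-1)=-1$, which is consistent throughout. The other three operators are handled identically.

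I do not expect any serious obstacle: once Theorem \ref{t:th1} is in hand, the proof is essentially bookkeeping on the phase factor $e^{\mp i\sgn(k)\alpha\pi/2}$ combined with linearity. The only point that deserves an explicit sentence is the consistency of the sign convention at $k=0$ and $k=-1$, for which the vanishing of the operators on $\lambda_0$ is the decisive fact.
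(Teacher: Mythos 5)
Your proposal is correct and follows essentially the same route as the paper: linearity applied to the decomposition $\mu_k=(\lambda_k-\lambda_{k+1})/2$, Theorem \ref{t:th1} for the generic case $\sgn(k)=\sgn(k+1)$, and the vanishing of all the operators on $\lambda_0\equiv 1$ to handle $k\in\{-1,0\}$. Your explicit remark that the $k=0$ case forces the convention $\sgn(0)=1$, consistent with the positive branch of \eqref{e:fraclapmuk}, is a point the paper's proof passes over more quickly, but the argument is the same.
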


\begin{proof}
Straightforward. For instance, when $\alpha\in(0,1)$, from \eqref{Dalpha:ln:F} and \eqref{e:muk},
\begin{align*}
\Da[\mu_{k}](x) & = \Da\left[\frac{\lambda_k - \lambda_{k+1}}{2}\right](x) = \frac{\Da[\lambda_{k}](x) - \Da[\lambda_{k+1}](x)}{2}
	\cr
& = \frac{e^{-i\sgn(k)\alpha\frac\pi2}(-\Delta)^{\alpha/2}\lambda_{k}(x) - e^{-i\sgn(k+1)\alpha\frac\pi2}(-\Delta)^{\alpha/2}\lambda_{k+1}(x)}{2}.
	\cr
& = e^{-i\sgn(k)\alpha\frac\pi2}(-\Delta)^{\alpha/2}\frac{\lambda_{k}(x) - \lambda_{k+1}(x)}{2} = e^{-i\sgn(k)\alpha\frac\pi2}(-\Delta)^{\alpha/2}\mu_k(x),
\end{align*}
where we have used that, when $k \not = 0$ and $k \not = -1$, $\sgn(k) \equiv \sgn(k+1)$, when $k = 0$, $(-\Delta)^{\alpha/2}\lambda_{k}(x) = (-\Delta)^{\alpha/2}1 = 0$, and, when $k = -1$, $(-\Delta)^{\alpha/2}\lambda_{k+1}(x) = (-\Delta)^{\alpha/2}1 = 0$.

The proof of the other identities in \eqref{e:dalphamukvsflamuk} is identical.

\end{proof}

Note that the identities \eqref{e:dalphalkvsflalk} and \eqref{e:dalphamukvsflamuk} allow obtaining immediately explicit formulas for the Weyl-Marchaud derivatives of the sine-like and cosine-like versions of $\lambda_k(x)$ and $\mu_k(x)$. We omit them here, in order not to increase excessively the length of this paper, but they are a straightforward adaptation of the results in \cite{cayamacuestadelahoz2020} and pose no difficulty.

On the other hand, in \cite[Th. 2.2]{cayamacuestadelahoz2020}, $(-\Delta)^{\alpha/2}\varphi_k(\cot(s))$ was expressed in terms of $s\in(0,\pi)$, for both even and odd values of $k$. More precisely, the following result was proved. Let $k\in\mathbb Z$ and $s\in(0,\pi)$. Then,
\begin{equation}
\label{e:fraclapphiksaone}
(-\Delta)^{1/2}\varphi_k(\cot(s)) = 
	\left\lbrace
	\begin{aligned}
		& |k|\sin^2(s)e^{iks}, & \text{$k$ even,}
		\\
		& {-\frac{ik}{\pi}}\left(\frac{2}{k^2-4} + \sum_{l=-\infty}^\infty\frac{4\sgn(l)e^{i2ls}}{(k-2l)((k-2l)^2-4)}\right), & \text{$k$ odd.}
	\end{aligned}
	\right.
\end{equation}
Moreover, let $\alpha\in(0,1)\cup(1,2)$, $k\in\mathbb Z$ and $s\in(0,\pi)$. Then,
\begin{equation}
\label{e:fraclapphiks}
	(-\Delta)^{\alpha/2}\varphi_k(\cot(s))
	=
	\left\{
	\begin{aligned}
		& \frac{c_\alpha\sin^{\alpha-1}(s)}{8\tan(\alpha\frac\pi2)}\sum_{l=-\infty}^\infty e^{i2ls}((1-\alpha)k^2-4kl)
		\\
		& \quad \times \frac{\Gamma\left(\frac{-1+\alpha}{2}+|l|\right)\Gamma\left(\frac{-1-\alpha}{2}+\left|\frac{k}{2}-l\right|\right)}{\Gamma\left(\frac{3-\alpha}{2}+|l|\right)\Gamma\left(\frac{3+\alpha}{2}+\left|\frac{k}{2}-l\right|\right)}, & \text{$k$ even,}
		\\
		& i\frac{c_\alpha\sin^{\alpha-1}(s)}{8}\sum_{l=-\infty}^\infty e^{i2ls}((1-\alpha)k^2-4kl)
		\\
		&  \quad \times\sgn(\tfrac{k}{2}-l)\frac{\Gamma\left(\frac{-1+\alpha}{2}+|l|\right)\Gamma\left(\frac{-1-\alpha}{2}+\left|\frac{k}{2}-l\right|\right)}{\Gamma\left(\frac{3-\alpha}{2}+|l|\right)\Gamma\left(\frac{3+\alpha}{2}+\left|\frac{k}{2}-l\right|\right)}, & \text{$k$ odd,}
	\end{aligned}
	\right.
\end{equation}
where $c_\alpha$ is given by \eqref{e:ca}. Note that in this paper, we work rather with $(-\Delta)^{\alpha/2}\lambda_k(\cot(s))$, whose expression follows trivially after replacing $k$ by $2k$ in the even case of \eqref{e:fraclapphiksaone} and \eqref{e:fraclapphiks}, because  $\lambda_{k}(x)\equiv\varphi_{2k}(x)$:
\begin{equation}
	\label{e:fraclaplks}
	(-\Delta)^{\alpha/2}\lambda_k(\cot(s))
	=
	\left\{
	\begin{aligned}
		& 2|k|\sin^2(s)e^{i2ks}, & \alpha=1,
		\\
		& \frac{c_\alpha\sin^{\alpha-1}(s)}{2\tan(\alpha\frac\pi2)}\sum_{l=-\infty}^\infty e^{i2ls}((1-\alpha)k^2-2kl)
		\\
		& \quad \times \frac{\Gamma\left(\frac{-1+\alpha}{2}+|l|\right)\Gamma\left(\frac{-1-\alpha}{2}+|k-l|\right)}{\Gamma\left(\frac{3-\alpha}{2}+|l|\right)\Gamma\left(\frac{3+\alpha}{2}+|k-l|\right)}, & \alpha\not=1.
	\end{aligned}
	\right.
\end{equation}
This last formula is very convenient from a numerical point of view, and, thanks to the identities in \eqref{e:dalphalkvsflalk}, we have its corresponding version for $\Da[\lambda_{k}](x)$ and $\barDa[\lambda_{k}](x)$, as the following corollary shows.
\begin{corollary}\label{cor:Dalks}
Let $\lambda_k(x)$ be defined as in \eqref{e:lk}, $\alpha\in(0,1)$, $k\in\mathbb{Z}$ and $s\in(0,\pi)$. Then,
\begin{align*}
		\Da[\lambda_{k}](\cot(s)) \equiv & \frac{e^{-i\sgn(k)\alpha\frac\pi2}c_\alpha\sin^{\alpha-1}(s)}{2\tan(\alpha\frac\pi2)}\sum_{l=-\infty}^\infty e^{i2ls}((1-\alpha)k^2-2kl)
		\cr
		& \quad \times  \frac{\Gamma\left(\frac{-1+\alpha}{2}+|l|\right)\Gamma\left(\frac{-1-\alpha}{2}+|k-l|\right)}{\Gamma\left(\frac{3-\alpha}{2}+|l|\right)\Gamma\left(\frac{3+\alpha}{2}+|k-l|\right)},
		\\
		\barDa[\lambda_{k}](\cot(s)) \equiv & -\frac{e^{i\sgn(k)\alpha\frac\pi2}c_\alpha\sin^{\alpha-1}(s)}{2\tan(\alpha\frac\pi2)}\sum_{l=-\infty}^\infty e^{i2ls}((1-\alpha)k^2-2kl)
		\cr
		& \quad \times  \frac{\Gamma\left(\frac{-1+\alpha}{2}+|l|\right)\Gamma\left(\frac{-1-\alpha}{2}+|k-l|\right)}{\Gamma\left(\frac{3-\alpha}{2}+|l|\right)\Gamma\left(\frac{3+\alpha}{2}+|k-l|\right)}.
	\end{align*}
Likewise, when $\alpha\in(1,2)$, $k\in\mathbb{Z}$ and $s\in(0,\pi)$. Then,
\begin{align*}
	\partial_x\Da[\lambda_{k}](\cot(s)) \equiv & \frac{e^{-i\sgn(k)\alpha\frac\pi2}c_\alpha\sin^{\alpha-1}(s)}{2\tan(\alpha\frac\pi2)} \sum_{l=-\infty}^\infty e^{i2ls}((1-\alpha)k^2-2kl)
	\cr
	& \quad  \times \frac{\Gamma\left(\frac{-1+\alpha}{2}+|l|\right)\Gamma\left(\frac{-1-\alpha}{2}+|k-l|\right)}{\Gamma\left(\frac{3-\alpha}{2}+|l|\right)\Gamma\left(\frac{3+\alpha}{2}+|k-l|\right)},
	\\
	\partial_x\barDa[\lambda_{k}](\cot(s)) \equiv & \frac{e^{i\sgn(k)\alpha\frac\pi2}c_\alpha\sin^{\alpha-1}(s)}{2\tan(\alpha\frac\pi2)}\sum_{l=-\infty}^\infty e^{i2ls}((1-\alpha)k^2-2kl)
	\cr
	& \quad \times  \frac{\Gamma\left(\frac{-1+\alpha}{2}+|l|\right)\Gamma\left(\frac{-1-\alpha}{2}+|k-l|\right)}{\Gamma\left(\frac{3-\alpha}{2}+|l|\right)\Gamma\left(\frac{3+\alpha}{2}+|k-l|\right)}.
\end{align*}

\end{corollary}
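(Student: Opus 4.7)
The plan is to obtain all four identities by direct substitution: combine the operator-level identities \eqref{e:dalphalkvsflalk} established in Theorem \ref{t:th1} with the Fourier-series-type representation \eqref{e:fraclaplks} of $(-\Delta)^{\alpha/2}\lambda_k(\cot(s))$ (taking in each case the branch $\alpha\not=1$ of \eqref{e:fraclaplks}, since the corollary excludes $\alpha=1$). No new analytic work is needed; the content is purely algebraic once these two inputs are in hand.

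Concretely, for $\alpha\in(0,1)$ I would set $x=\cot(s)$ in the first two identities of \eqref{e:dalphalkvsflalk} to write
\[
\Da[\lambda_{k}](\cot(s)) = e^{-i\sgn(k)\alpha\frac\pi2}\,(-\Delta)^{\alpha/2}\lambda_{k}(\cot(s)),\qquad \barDa[\lambda_{k}](\cot(s)) = -e^{i\sgn(k)\alpha\frac\pi2}\,(-\Delta)^{\alpha/2}\lambda_{k}(\cot(s)),
\]
and then plug the $\alpha\not=1$ line of \eqref{e:fraclaplks} into the right-hand side. The phase factors $e^{\mp i\sgn(k)\alpha\pi/2}$ simply multiply the common series, producing exactly the two expressions stated in the corollary. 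For $\alpha\in(1,2)$, the argument is the same, using instead the last two identities in \eqref{e:dalphalkvsflalk}, which relate $\partial_x\DD^{\alpha-1}[\lambda_{k}]$ and $\partial_x\overline{\DD^{\alpha-1}}[\lambda_{k}]$ to the same $(-\Delta)^{\alpha/2}\lambda_{k}$, and then substituting \eqref{e:fraclaplks} once more.

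There is no substantive obstacle: the only minor checks are that, for every $k\in\mathbb Z$ and every $\alpha\in(0,1)\cup(1,2)$, the gamma-function ratios appearing in \eqref{e:fraclaplks} avoid the non-positive-integer poles and that the series converges absolutely for $s\in(0,\pi)$, both of which are already guaranteed by the derivation of \eqref{e:fraclaplks} in \cite{cayamacuestadelahoz2020}. The case $k=0$ is trivial, since $\lambda_0\equiv 1$ makes both sides vanish. Hence the corollary follows immediately from Theorem \ref{t:th1} and \eqref{e:fraclaplks}.
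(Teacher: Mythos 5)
Your proposal is correct and matches the paper's own argument exactly: the corollary is obtained by substituting $x=\cot(s)$ into the identities \eqref{e:dalphalkvsflalk} from Theorem \ref{t:th1} and then inserting the series representation \eqref{e:fraclaplks} of $(-\Delta)^{\alpha/2}\lambda_k(\cot(s))$. The paper dispatches this in one line (``Trivial, from \eqref{e:fraclaplks} and the identities in \eqref{e:dalphalkvsflalk}''), so your additional remarks on convergence and the $k=0$ case are harmless elaboration rather than a deviation.
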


\begin{proof}
Trivial, from \eqref{e:fraclaplks} and the identities in \eqref{e:dalphalkvsflalk}.
\end{proof}

\subsection{Discussion on the Weyl-Marchaud derivatives of $\varphi_k(x)$, for $k$ odd}

\label{s:Dphik}

In the previous pages, we have dealt with $\Da[\lambda_{k}]$, $\barDa[\lambda_{k}]$, $\partial_x\Da[\lambda_{k}]$ and $\partial_x\barDa[\lambda_{k}]$, where $\lambda_{k}(x) \equiv\varphi_{2k}(x)$. Therefore, a natural question that arises is whether it is possible to obtain explicit formulas for the Weyl-Marchaud derivatives of $\varphi_k(x)$, for $k$ odd. Unfortunately, those formulas are still not known to us, but we think it is worth discussing where the difficulties lie.

From now we suppose, without loss of generality, that $k > 0$. Recall that, from the definition of $\varphi_k(x)$ in \eqref{e:phik}:
\begin{equation}
\label{e:phikphikp}
\varphi_k(x) = \frac{(x + i)^k}{(1 + x^2)^{k/2}} \Longrightarrow \varphi_k'(x) = -\frac{ik(x + i)^k}{(1 + x^2)^{1+k/2}},
\end{equation}
Then, introducing $\varphi_{k}$ into \eqref{Int:Dalpha}:
\begin{equation}
\label{e:Daphikx}
\Da[\varphi_k](x) = -\frac{1}{\Gamma(1-\alpha)} \int^{\infty}_{0} \frac{1}{z^\alpha}\frac{ik(x - z + i)^k}{(1 + (x - z)^2)^{1+k/2}}dz.
\end{equation}
Likewise, introducing $\varphi_{k}(x)$ into \eqref{Int:Dalpha:bar}:
\begin{equation}
	\label{e:Dabarphikx}
	\barDa[\varphi_k](x) = -\frac{1}{\Gamma(1-\alpha)} \int^{\infty}_{0} \frac{1}{z^\alpha}\frac{ik(x + z + i)^k}{(1 + (x + z)^2)^{1+k/2}}dz.
\end{equation}
When $k$ is even, $k/2$ is an integer, so the denominator in the integrals in \eqref{e:Daphikx} and \eqref{e:Dabarphikx} is a polynomial, and hence, those integrals can be computed by complex variable techniques, as we have done with the equivalent equations \eqref{Dalpha:ln} and \eqref{Dalpha:ln:bar}. However, when $k$ is odd, the denominator in the integrals in \eqref{e:Daphikx} and \eqref{e:Dabarphikx} is no longer a polynomial, but the square root of a polynomial, and the same arguments cannot be applied.

On the other hand, in view of the nontrivial identities in \eqref{e:dalphalkvsflalk} and \eqref{e:dalphamukvsflamuk}, one might be tempted to think that they can be valid for functions other than $\lambda_k(x)$ and $\mu_k(x)$, in particular for $\varphi_k(x)$, with $k$ odd, but unfortunately, this is not true. To illustrate this, we will show in the following pages that \eqref{e:dalphalkvsflalk} and \eqref{e:dalphamukvsflamuk} are no longer valid when applied to $\varphi_k(x)$, for $k$ odd. Indeed, even if we do not still know an explicit expression for the Weyl-Marchaud derivatives of those functions, we are at least able to compute them at $x = 0$. 

Let us compute first $\Da[\varphi_k](0)$. Evaluating \eqref{e:Daphikx} at $x = 0$:
\begin{align*}
	\Da[\varphi_k](0) & = -\frac{ik}{\Gamma(1-\alpha)} \int^{\infty}_{0}\frac{z^{-\alpha}(-z + i)^k}{(1 + z^2)^{1+k/2}}dz
	\cr
	& = -\frac{i^{1+k}k}{\Gamma(1-\alpha)}\sum_{n=0}^{k}i^n\binom{k}{n}\int^{\infty}_{0}\frac{z^{n-\alpha}}{(1 + z^2)^{1+k/2}}dz
	\cr
	& = -\frac{i^{1+k}k}{2\Gamma(1-\alpha)\Gamma\left(1+\frac k2\right)}\sum_{n=0}^{k}i^n\binom{k}{n}\Gamma\left(\frac{1 + \alpha + k - n}2\right)\Gamma\left(\frac{1-\alpha+n}2\right),
\end{align*}
where we have used that
\begin{align*}
	\int^{\infty}_{0}\frac{z^p}{(1 + z^2)^q}dz & = \int^{\pi/2}_{0}\frac{\tan^p(x)}{(1 + \tan^2(x))^q\cos^2(x)}dx = \int^{\pi/2}_{0}\sin^p(x)\cos^{-2-p+2q}(x)dx
	\cr
	& =  \frac12\int_0^1y^{(1+p)/2-1}(1-y)^{(-1-p)/2+q-1}dy = \frac12B\left(\frac{1+p}2,\frac{-1-p}2+q\right)
	\cr
	& = \frac{1}{2\Gamma(q)}\Gamma\left(\frac{1+p}2\right)\Gamma\left(\frac{-1-p}2+q\right),
\end{align*}
with $B(\cdot,\cdot)$ being Euler's beta function, for $p = n - \alpha$, $q = 1 + k/2$. Likewise, in order to compute $\barDa[\varphi_k](0)$, we evaluating \eqref{e:Dabarphikx} at $x = 0$:
\begin{align*}
	\barDa[\varphi_k](0) & = -\frac{ik}{\Gamma(1-\alpha)} \int^{\infty}_{0}\frac{z^{-\alpha}(z + i)^k}{(1 + z^2)^{1+k/2}}dz
	\cr
	& = -\frac{i^{1+k}k}{\Gamma(1-\alpha)}\sum_{n=0}^{k}(-i)^n\binom{k}{n}\int^{\infty}_{0}\frac{z^{n-\alpha}}{(1 + z^2)^{1+k/2}}dz
	\cr
	& = -\frac{i^{1+k}k}{2\Gamma(1-\alpha)\Gamma\left(1+\frac k2\right)}\sum_{n=0}^{k}(-i)^n\binom{k}{n}\Gamma\left(\frac{1 + \alpha + k - n}2\right)\Gamma\left(\frac{1-\alpha+n}2\right).
\end{align*}
Finally, in order to compute $(-\Delta)^{\alpha/2}\varphi_k(x)$ at $x = 0$, we use the following definition of the fractional Laplacian from \cite[(7)]{cayamacuestadelahoz2021}:
\begin{equation*}
(-\Delta)^{\alpha/2}\varphi_k(x) = \frac{c_\alpha}{\alpha}\int^{\infty}_{0} \frac{\varphi_k'(x-z) - \varphi_k'(x+z)}{z^\alpha}dz,
\end{equation*}
with $c_\alpha$ given by \eqref{e:ca}. Hence, reasoning as above,
\begin{align*}
	(-\Delta)^{\alpha/2}\varphi_k(0) & = -\frac{ik2^{\alpha-1}\Gamma\left(\frac{1+\alpha}2\right)}{\sqrt{\pi}\Gamma\left(1-\frac\alpha2\right)} \int^{\infty}_{0} \frac{1}{z^\alpha}\left[\frac{(- z + i)^k}{(1 + z^2)^{1+k/2}} - \frac{(z + i)^k}{(1 + z^2)^{1+k/2}}
	\right]dz
	\cr
	& = \frac{i^{k}2^\alpha\Gamma\left(\frac{1+\alpha}2\right)}{\sqrt{\pi}\Gamma\left(\frac k2\right)\Gamma\left(1-\frac\alpha2\right)}
\sum_{n=0}^{k}\sin\left(n\frac\pi2\right)\binom{k}{n}\Gamma\left(\frac{1 + \alpha + k - n}2\right)\Gamma\left(\frac{1-\alpha+n}2\right),
\end{align*}
where we have used that $(-i)^n - i^n = -2i\sin(n\pi/2)$. We remark that $(-\Delta)^{\alpha/2}\varphi_k(0)$ is pure real for $k$ even, and pure imaginary for $k$ odd, unlike $\Da[\varphi_k](0)$ and $\barDa[\varphi_k](x)$, which have nonzero real and immaginary parts. For instance, when $k = 1$,
\begin{align*}
\Da[\varphi_1](0) & = \frac{\Gamma\left(1 + \frac\alpha2\right)\Gamma\left(\frac{1-\alpha}2\right) + i\Gamma\left(1-\frac\alpha2\right)\Gamma\left(\frac{1 + \alpha}2\right)}{\sqrt\pi\Gamma(1-\alpha)},
	\cr
\barDa[\varphi_1](0) & = \frac{\Gamma\left(1 + \frac\alpha2\right)\Gamma\left(\frac{1-\alpha}2\right) - i\Gamma\left(1-\frac\alpha2\right)\Gamma\left(\frac{1 + \alpha}2\right)}{\sqrt\pi\Gamma(1-\alpha)},
	\cr
(-\Delta)^{\alpha/2}\varphi_1(0) & = \frac{i2^{\alpha}\Gamma^2\left(\frac{1 + \alpha}2\right)}{\pi} = \frac{i}{\cos(\alpha\frac\pi2)}\cdot\frac{\Gamma\left(1-\frac\alpha2\right)\Gamma\left(\frac{1 + \alpha}2\right)}{\sqrt\pi\Gamma(1-\alpha)},
\end{align*}
where we have applied Euler's reflection formula \eqref{Euler:R} for $\omega = (1 + \alpha)/2$, and Legendre's duplication formula \eqref{e:legendre}, for $\omega = (1 - \alpha)/2$. Therefore, we have
\begin{align*}
\operatorname{Im}(\Da[\varphi_1](0)) & = -i\cos\left(\alpha\frac\pi2\right)(-\Delta)^{\alpha/2}\varphi_1(0) \Longrightarrow \Da[\varphi_{1}](x) \not\equiv e^{-i\sgn(1)\alpha\frac\pi2}(-\Delta)^{\alpha/2}\varphi_{1}(x),
	\cr
\operatorname{Im}(\barDa[\varphi_1](0)) & = i\cos\left(\alpha\frac\pi2\right)(-\Delta)^{\alpha/2}\varphi_1(0) \Longrightarrow \overline{\Da[\varphi_{1}](x)} \not\equiv -e^{-i\sgn(1)\alpha\frac\pi2}(-\Delta)^{\alpha/2}\varphi_{1}(x),
\end{align*}
and a similar reasoning can be made for any other odd value of $k$, i.e., the identities \eqref{e:dalphalkvsflalk} and \eqref{e:dalphamukvsflamuk} are not valid for $\varphi_k(x)$ with $k$ odd.

The previous arguments show that, even in the case where we know explicitly the expression of $(-\Delta)^{\alpha/2}u(x)$ for a given function $u(x)$, there is unfortunately no universal formula that allows to express $\Da[u](x)$ and $\barDa[u](x)$ in terms of $(-\Delta)^{\alpha/2}u(x)$; this is valid in particular for $\varphi_{k}(x)$, with $k$ odd. As a consequence, we cannot use \eqref{e:fraclapphiks} with odd $k$, to express $\Da[\varphi_k](\cot(s))$, $\barDa[\varphi_k](\cot(s))$, $\partial_x\Da[\varphi_k](\cot(s))$ and $\partial_x\barDa[\varphi_k](\cot(s))$ in terms of $(-\Delta)^{\alpha/2}\varphi_k(\cot(s))$. On the other hand, we have been unable to deduce the formulas in Corollary \ref{cor:Dalks} directly from \eqref{e:Daphikx} and \eqref{e:Dabarphikx}, without resorting to the identities in \eqref{e:dalphalkvsflalk}; this requires further research, and we postpone it for the future.

\section{Riesz-Feller operators acting on $\lambda_k(x)$}

\label{s:RFln}

In what follows, we obtain explicit expressions for the Riesz-Feller operators acting on $\lambda_k(x)$. Even if most expressions are based on the results in Section \ref{s:Dphik}, we gather them in a separate section, because of their importance.
\begin{theorem}
	
Let $\lambda_k(x)$ be defined as in \eqref{e:lk}, $\alpha\in(0,2)$, $k\in\mathbb Z$, $x\in\mathbb R$. Then,
\begin{align}
\label{e:Daglk}
\Dag[\lambda_{k}](x) & = -e^{i\sgn(k)\gamma\frac\pi2}(-\Delta)^{\alpha/2}\lambda_{k}(x)
	\\
\label{e:DaglkF}
& = \frac{2e^{i\sgn(k)\gamma\frac\pi2}|k|\Gamma(1+\alpha)}{(i\sgn(k)x+1)^{1+\alpha}}{}_2F_1\left(1 - |k|, 1 + \alpha; 2; \frac{2}{i\sgn(k)x+1}\right).
\end{align}
Moreover, when $\alpha = 1$, this expression can be simplified to
\begin{equation}
\label{e:Doneglk}
D^1_\gamma[\lambda_{k}](x) = -\frac{2e^{i\sgn(k)\gamma\frac\pi2}|k|(x+i)^{2k}}{(1+x^2)^{1+k}}.
\end{equation}

\end{theorem}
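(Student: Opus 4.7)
The approach is a clean case-split on $\alpha$ that reduces everything to ingredients already established. Cases $\alpha\in(0,1)$ and $\alpha\in(1,2)$ both proceed by plugging Theorem~\ref{t:th1} into the representation formulas \eqref{RF:01:integral:representation} and \eqref{RF:integral:representation}, while $\alpha=1$ is handled separately via \eqref{e:D1g}. The hypergeometric form \eqref{e:DaglkF} will then drop out by invoking \eqref{e:fraclap2F1}, and \eqref{e:Doneglk} by invoking \eqref{e:fraclapa1}.

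For $\alpha\in(0,1)$, substituting \eqref{Dalpha:ln:F} and \eqref{Dalpha:ln:bar:F} into \eqref{RF:01:integral:representation} gives
\begin{equation*}
\Dag[\lambda_k](x)=\Gamma(-\alpha)\bigl(c^1_{\gamma,\alpha}e^{-i\sgn(k)\alpha\pi/2}+c^2_{\gamma,\alpha}e^{i\sgn(k)\alpha\pi/2}\bigr)(-\Delta)^{\alpha/2}\lambda_k(x),
\end{equation*}
where the minus sign between $\Da$ and $\barDa$ in \eqref{RF:01:integral:representation} is absorbed by the minus sign in the identity for $\barDa[\lambda_k]$ in \eqref{e:dalphalkvsflalk}. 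For $\alpha\in(1,2)$ the analogous substitution of \eqref{Dalpha:ln:Fa12} and \eqref{Dalpha:ln:bar:Fa12} into \eqref{RF:integral:representation} produces exactly the same scalar prefactor. Now the algebraic identity \eqref{check:symbol}, read with $\sgn(\xi)$ replaced by $-\sgn(k)$ and both sides divided by $|\xi|^\alpha$, yields
\begin{equation*}
\Gamma(-\alpha)\bigl(c^1_{\gamma,\alpha}e^{-i\sgn(k)\alpha\pi/2}+c^2_{\gamma,\alpha}e^{i\sgn(k)\alpha\pi/2}\bigr)=-e^{i\sgn(k)\gamma\pi/2},
\end{equation*}
which is precisely \eqref{e:Daglk}. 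Inserting \eqref{e:fraclap2F1} then gives \eqref{e:DaglkF}.

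For $\alpha=1$, formula \eqref{e:D1g} applied to $\lambda_k$ combined with the observation from \eqref{e:fraclapa1} that $\lambda_k'(x)=-i\sgn(k)(-\Delta)^{1/2}\lambda_k(x)$ gives
\begin{equation*}
D^1_\gamma[\lambda_k](x)=-\bigl(\cos(\gamma\pi/2)+i\sgn(k)\sin(\gamma\pi/2)\bigr)(-\Delta)^{1/2}\lambda_k(x)=-e^{i\sgn(k)\gamma\pi/2}(-\Delta)^{1/2}\lambda_k(x),
\end{equation*}
matching \eqref{e:Daglk}, and substituting the explicit closed form of $(-\Delta)^{1/2}\lambda_k$ from \eqref{e:fraclapa1} yields \eqref{e:Doneglk}. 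The trivial case $k=0$ works because both sides vanish.

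There is essentially no genuine obstacle, since Theorem~\ref{t:th1} already does the heavy lifting; the only point requiring some care is the bookkeeping of signs and of the exponentials $e^{\pm i\sgn(k)\alpha\pi/2}$ when matching the identity \eqref{check:symbol} (which is written for a Fourier variable $\xi$) against the expressions in \eqref{e:dalphalkvsflalk}, which demands taking $\sgn(\xi)=-\sgn(k)$. A secondary check is that the difference between the sign conventions in \eqref{RF:01:integral:representation} and \eqref{RF:integral:representation} is exactly reconciled by the corresponding sign flip between the $\barDa$ and $\partial_x\overline{\DD^{\alpha-1}}$ lines of \eqref{e:dalphalkvsflalk}, so that the same scalar coefficient arises in both regimes.
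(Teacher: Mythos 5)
Your proof is correct and follows essentially the same route as the paper: for $\alpha\in(0,1)\cup(1,2)$ you substitute the identities of Theorem \ref{t:th1} into \eqref{RF:01:integral:representation} and \eqref{RF:integral:representation} and evaluate the resulting scalar via \eqref{check:symbol} with $\xi=-\sgn(k)$, and for $\alpha=1$ you use \eqref{e:D1g} together with $(-\Delta)^{1/2}\lambda_k=i\sgn(k)\lambda_k'$ from \eqref{e:fraclapa1}, exactly as in the paper. The sign bookkeeping you flag is handled correctly, so no further changes are needed.
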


\begin{proof}

When $\alpha\in(0,1)$, from \eqref{RF:01:integral:representation},
\begin{align*}
\Dag[\lambda_{k}](x) & = \Gamma(-\alpha)\left(c_{\gamma,\alpha}^1\Da[\lambda_{k}](x) - c_{\gamma,\alpha}^2 \barDa[\lambda_{k}](x)\right)
	\cr
& = \Gamma(-\alpha)\left(c_{\gamma,\alpha}^1e^{-i\sgn(k)\alpha\frac\pi2}(-\Delta)^{\alpha/2}\lambda_{k}(x) - c_{\gamma,\alpha}^2 (-e^{i\sgn(k)\alpha\frac\pi2})(-\Delta)^{\alpha/2}\lambda_{k}(x)\right)
	\cr
& = \Gamma(-\alpha)\left(c_{\gamma,\alpha}^1(-i\sgn(k))^\alpha + c_{\gamma,\alpha}^2(i\sgn(k))^\alpha\right)(-\Delta)^{\alpha/2}\lambda_{k}(x)
	\cr
& = -|{-\sgn(k)}|^\alpha e^{-i\sgn(-\sgn(k))\gamma\frac\pi2}(-\Delta)^{\alpha/2}\lambda_{k}(x)
	\cr
& = - e^{i\sgn(k)\gamma\frac\pi2}(-\Delta)^{\alpha/2}\lambda_{k}(x),
\end{align*}
where we have taken $\xi = -\sgn(k)$ in \eqref{check:symbol}. Likewise, when $\alpha\in(1,2)$, from \eqref{RF:integral:representation},
\begin{align*}
\Dag[\lambda_{k}](x) & = \Gamma(-\alpha)\left(c_{\gamma,\alpha}^1\partial_x\DD^{\alpha-1}[u](x) + c_{\gamma,\alpha}^2 \partial_x\overline{\DD^{\alpha-1}}[u](x)\right)
	\cr
& = \Gamma(-\alpha)\left(c_{\gamma,\alpha}^1e^{-i\sgn(k)\alpha\frac\pi2}(-\Delta)^{\alpha/2}\lambda_{k}(x) + c_{\gamma,\alpha}^2e^{i\sgn(k)\alpha\frac\pi2}(-\Delta)^{\alpha/2}\lambda_{k}(x)\right)
	\cr
& = -e^{i\sgn(k)\gamma\frac\pi2}(-\Delta)^{\alpha/2}\lambda_{k}(x).
\end{align*}
Finally, when $\alpha = 1$, from \eqref{e:D1g} and \eqref{e:fraclapa1},
\begin{align}
\label{e:Doneglkproof}
D^1_\gamma[\lambda_k](x) & = -\cos \left(\gamma\frac\pi2\right)(-\Delta)^{1/2}\lambda_k(x) +\sin \left(\gamma\frac\pi2\right)\lambda_k'(x)
	\cr
& = -\cos \left(\gamma\frac\pi2\right)(-\Delta)^{1/2}\lambda_k(x) - i\sgn(k)\sin\left(\gamma\frac\pi2\right)(-\Delta)^{1/2}\lambda_k(x)
	\cr
& = -e^{i\sgn(k)\gamma\frac\pi2}(-\Delta)^{1/2}\lambda_k(x),
\end{align}
which concludes the proof of \eqref{e:Daglk}. Then, \eqref{e:DaglkF} follows after applying \eqref{e:fraclap2F1} to \eqref{e:Daglk}, and \eqref{e:Doneglk} follows after applying \eqref{e:fraclapa1} to \eqref{e:Doneglkproof}.

\end{proof}

Note that, from \eqref{e:fraclapphiks} and \eqref{e:DaglkF}, it is straightforward to express $\Dag[\lambda_{k}](\cot(s))$ in terms of $s$, as shows the following result.
\begin{corollary}

\label{cor:Daglks}

Let $\lambda_k(x)$ be defined as in \eqref{e:lk}, $\alpha\in(0,1)\cup(1,2)$, $k\in\mathbb{Z}$ and $s\in(0,\pi)$. Then,
\begin{align*}
	\Dag[\lambda_{k}](\cot(s)) & = -\frac{c_\alpha e^{i\sgn(k)\gamma\frac\pi2}\sin^{\alpha-1}(s)}{2\tan(\alpha\frac\pi2)}\sum_{l=-\infty}^\infty e^{i2ls}((1-\alpha)k^2-2kl)
	\\
	& \qquad \times \frac{\Gamma\left(\frac{-1+\alpha}{2}+|l|\right)\Gamma\left(\frac{-1-\alpha}{2}+|k-l|\right)}{\Gamma\left(\frac{3-\alpha}{2}+|l|\right)\Gamma\left(\frac{3+\alpha}{2}+|k-l|\right)}.
\end{align*}
Moreover, when $\alpha = 1$, $k\in\mathbb{Z}$ and $s\in(0,\pi)$,
\begin{equation}
\label{e:Doneglks}
D^1_\gamma[\lambda_{k}](\cot(s)) = -2e^{i\sgn(k)\gamma\frac\pi2}|k|\sin^2(s)e^{i2ks}.
\end{equation}
\end{corollary}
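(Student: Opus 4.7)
The plan is to derive both formulas by a straightforward substitution, combining two already-established ingredients: the identity \eqref{e:Daglk}, which expresses $\Dag[\lambda_k](x)$ as a scalar multiple of $(-\Delta)^{\alpha/2}\lambda_k(x)$, and the explicit representation \eqref{e:fraclaplks} of $(-\Delta)^{\alpha/2}\lambda_k(\cot(s))$ in terms of the variable $s\in(0,\pi)$.

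First, I would evaluate \eqref{e:Daglk} at $x=\cot(s)$, which yields
\begin{equation*}
\Dag[\lambda_{k}](\cot(s)) = -e^{i\sgn(k)\gamma\frac\pi2}(-\Delta)^{\alpha/2}\lambda_{k}(\cot(s)).
\end{equation*}
For the first assertion, I would then substitute the $\alpha\neq 1$ branch of \eqref{e:fraclaplks}, i.e.
\begin{equation*}
(-\Delta)^{\alpha/2}\lambda_k(\cot(s)) = \frac{c_\alpha\sin^{\alpha-1}(s)}{2\tan(\alpha\frac\pi2)}\sum_{l=-\infty}^\infty e^{i2ls}((1-\alpha)k^2-2kl)\frac{\Gamma\left(\frac{-1+\alpha}{2}+|l|\right)\Gamma\left(\frac{-1-\alpha}{2}+|k-l|\right)}{\Gamma\left(\frac{3-\alpha}{2}+|l|\right)\Gamma\left(\frac{3+\alpha}{2}+|k-l|\right)},
\end{equation*}
into the displayed identity, and pull out the constant $-e^{i\sgn(k)\gamma\pi/2}$. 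The result is precisely the claimed formula.

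For the second assertion, I would use the $\alpha = 1$ branch of \eqref{e:fraclaplks}, namely $(-\Delta)^{1/2}\lambda_k(\cot(s)) = 2|k|\sin^2(s)e^{i2ks}$, directly in the evaluated version of \eqref{e:Daglk}; equivalently, one can simply evaluate \eqref{e:Doneglk} at $x=\cot(s)$ and note that $(\cot(s)+i)^{2k}/(1+\cot^2(s))^{1+k} = \sin^2(s)e^{i2ks}$ (using $\cot(s)+i = e^{is}/\sin(s)$). Either route yields \eqref{e:Doneglks}.

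Since both steps amount to verbatim substitutions into formulas already established in the paper, I do not anticipate any real obstacle; the only care needed is to ensure the exponential prefactor $-e^{i\sgn(k)\gamma\pi/2}$ is correctly placed outside the sum and not absorbed elsewhere, and to verify that the trivial cases $k = 0$ (where both sides vanish) are consistent with the formula, which they are since the summand contains the factor $k$ in $((1-\alpha)k^2-2kl)$ and the right-hand side of \eqref{e:Doneglks} has the factor $|k|$.
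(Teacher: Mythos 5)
Your proposal is correct and coincides with the paper's own proof, which simply combines \eqref{e:Daglk} with \eqref{e:fraclaplks} for $\alpha\neq1$ and evaluates \eqref{e:Doneglk} at $x=\cot(s)$ for $\alpha=1$. The substitution $(\cot(s)+i)^{2k}/(1+\cot^2(s))^{1+k}=\sin^2(s)e^{i2ks}$ and the placement of the prefactor are exactly as you describe.
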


\begin{proof}
Trivial, from \eqref{e:fraclaplks} and \eqref{e:Daglk}. The $\alpha = 1$ case also follows after evaluating \eqref{e:Doneglk} at $x = \cot(s)$.

\end{proof}

As we have already said, when $k$ is odd, and $\alpha\in(0,1)\cup(1,2)$, we still do not know explicit expressions for $\Da[\varphi_k](x)$, $\barDa[\varphi_k](x)$, $\partial_x\Da[\varphi_k](x)$, $\partial_x\barDa[\varphi_k](x)$, so we do not have explicit expressions for $\Dag[\varphi_k](x)$ with $k$ odd either. However, in the special case $\alpha = 1$, the computation of $D^1_\gamma[\varphi_k](x)$ can be done by means of \eqref{e:D1g}, because $(-\Delta)^{1/2}\varphi_k(x)$ is known for $k$ odd \cite[Th. 2.4]{CuestaDelaHozGirona2023}). More precisely, if $k$ is an odd integer, and $x\in\mathbb R$, we have
\begin{align}
\label{e:fraclapphika}
(-\Delta)^{1/2} \varphi_k(x) & = {-\frac{2i}{\pi (2+k)}} - \frac{k(x + i)^k}{(1+x^2)^{1+k/2}}
+ \frac{8i}{\pi (4-k^2)}{}_2F_1\left(1, -1 - \frac k2; 2 - \frac k2; \frac{(x+i)^2}{1+x^2} \right).
\end{align}
Moreover, even if a direct application of \eqref{e:2F1} would yield an infinite sum, it is possible to compute ${}_2F_1(1, -1 - k/2; 2 - k/2; (x+i)^2/(1+x^2))$ by summing a finite number of terms (see \cite{CuestaDelaHozGirona2023}), from which the following equivalent expression follows:
\begin{align}
	\label{e:fraclapphikb}
	(-\Delta)^{1/2} \varphi_k(x) & = {-\frac{2i\sgn(k)}{\pi (2+|k|)}} - \frac{2ik(x + i)^k[x(1+x^2)^{1/2} + \arg\sinh(x)]}{\pi(1+x^2)^{1+k/2}}
	\cr
& \qquad - \sum_{n=0}^{(|k|-1)/2}\frac{8ik(x + i)^{[k-\sgn(k)(2n+1)]}}{\pi(2n-1)(2n+1)(2n+3)(1+x^2)^{[k-\sgn(k)(2n+1)]/2}}.
\end{align}
On the other hand, \eqref{e:fraclapphika} and \eqref{e:fraclapphikb} can be written in a slightly more compact way, when evaluated at $x = \cot(s)$, with $s\in(0,\pi)$, and expressed in terms of $s$:
\begin{align}
\label{e:fraclapphiksa}
(-\Delta)^{1/2}\varphi_k(\cot(s)) & = {-\frac{2i}{\pi (2+k)}} - k\sin^2(s)e^{iks}
 + \frac{8i}{\pi (4-k^2)}{}_2F_1\left(1, -1 - \frac k2; 2 - \frac k2; e^{i2s}\right);
\end{align}
or equivalently,
\begin{align}
\label{e:fraclapphiksb}
(-\Delta)^{1/2}\varphi_k(\cot(s)) & = {-\frac{2i\sgn(k)}{\pi (2+|k|)}} - \frac{2ike^{iks}}{\pi}\Bigg[\cos(s) + \sin^2(s)\ln\left(\cot\left(\frac s2\right)\right)
		\cr
& \qquad + \sum_{n=0}^{(|k|-1)/2}\frac{4e^{-i\sgn(k)(2n+1)s}}{(2n-1)(2n+1)(2n+3)}\Bigg].
\end{align}
Bearing in mind \eqref{e:fraclapphika} and \eqref{e:fraclapphiksa}, we have the following result.
\begin{lemma}

Let $k$ be an odd integer, $\varphi_k(x)$ be defined as in \eqref{e:phik}, $|\gamma|\le 1$, and $x\in\mathbb R$. Then,
\begin{align}
\label{e:Donegphika}
D^1_\gamma[\varphi_k](x) & = \frac{ke^{-i\gamma\frac\pi2}(x + i)^k}{(1+x^2)^{1+k/2}} +\cos\left(\gamma\frac\pi2\right)\bigg[{\frac{2i}{\pi (2+k)}} 
	\cr
	& \qquad - \frac{8i}{\pi (4-k^2)}{}_2F_1\left(1, -1 - \frac k2; 2 - \frac k2; \frac{(x+i)^2}{1+x^2} \right)\bigg],
\end{align}
or, equivalently,
\begin{align*}
D^1_\gamma[\varphi_k](x) & = \cos \left(\gamma\frac\pi2\right)\Bigg[{\frac{2i\sgn(k)}{\pi (2+|k|)}} + \frac{2ik(x + i)^k[x(1+x^2)^{1/2} + \arg\sinh(x)]}{\pi(1+x^2)^{1+k/2}}
	\cr
& \qquad + \sum_{n=0}^{(|k|-1)/2}\frac{8ik(x + i)^{[k-\sgn(k)(2n+1)]}}{\pi(2n-1)(2n+1)(2n+3)(1+x^2)^{[k-\sgn(k)(2n+1)]/2}}\Bigg]
	\cr
& \qquad - \sin \left(\gamma\frac\pi2\right) \frac{ik(x + i)^k}{(1 + x^2)^{1+k/2}}.
\end{align*}
Moreover, if $s\in(0,\pi)$, then
\begin{align*}
D^1_\gamma[\varphi_k](\cot(s)) & = ke^{-i\gamma\frac\pi2}\sin^2(s)e^{iks}
	\cr
	& \qquad +\cos\left(\gamma\frac\pi2\right)\bigg[{\frac{2i}{\pi (2+k)}} - \frac{8i}{\pi (4-k^2)}{}_2F_1\left(1, -1 - \frac k2; 2 - \frac k2; e^{i2s} \right)\bigg],
\end{align*}
or, equivalently,
\begin{align}
\label{e:Donegphiksb}
D^1_\gamma[\varphi_k](\cot(s)) & = \cos \left(\gamma\frac\pi2\right)\Bigg[{\frac{2i\sgn(k)}{\pi (2+|k|)}} + \frac{2ike^{iks}}{\pi}\Bigg[\cos(s) + \sin^2(s)\ln\left(\cot\left(\frac s2\right)\right)
	\cr
& \qquad + \sum_{n=0}^{(|k|-1)/2}\frac{4e^{-i\sgn(k)(2n+1)s}}{(2n-1)(2n+1)(2n+3)}\Bigg]\Bigg] - \sin \left(\gamma\frac\pi2\right) ik\sin^2(s)e^{iks}.
\end{align}

\end{lemma}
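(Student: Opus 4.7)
The statement is a direct computation built from pieces already assembled in the paper, so the plan is essentially substitution plus a clean regrouping. The key identity is \eqref{e:D1g}, namely
$$D^1_\gamma[\varphi_k](x) = -\cos\left(\gamma\tfrac\pi2\right)(-\Delta)^{1/2}\varphi_k(x) + \sin\left(\gamma\tfrac\pi2\right)\varphi_k'(x),$$
together with the explicit expressions for $(-\Delta)^{1/2}\varphi_k(x)$ in \eqref{e:fraclapphika} and \eqref{e:fraclapphikb}, and the derivative $\varphi_k'(x) = -ik(x+i)^k/(1+x^2)^{1+k/2}$ from \eqref{e:phikphikp}. There is no new analysis: the whole content is algebraic simplification.

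The first step is to derive the compact form \eqref{e:Donegphika}. I would substitute \eqref{e:fraclapphika} and the formula for $\varphi_k'(x)$ into \eqref{e:D1g}. The factor $(x+i)^k/(1+x^2)^{1+k/2}$ appears in both $(-\Delta)^{1/2}\varphi_k(x)$ (as the middle term $-k(x+i)^k/(1+x^2)^{1+k/2}$) and in $\varphi_k'(x)$. Collecting those two contributions gives
$$\frac{k(x+i)^k}{(1+x^2)^{1+k/2}}\Bigl[\cos\left(\gamma\tfrac\pi2\right) - i\sin\left(\gamma\tfrac\pi2\right)\Bigr] = \frac{ke^{-i\gamma\pi/2}(x+i)^k}{(1+x^2)^{1+k/2}},$$
which is exactly the first term of \eqref{e:Donegphika}. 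The remaining constant and ${}_2F_1$ terms of $(-\Delta)^{1/2}\varphi_k(x)$ get multiplied by $-\cos(\gamma\pi/2)$, producing the bracketed expression in \eqref{e:Donegphika} after flipping the signs.

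For the equivalent (finite-sum) form, I would instead substitute \eqref{e:fraclapphikb} into \eqref{e:D1g}, without merging the $\sin(\gamma\pi/2)\varphi_k'(x)$ piece with anything else, since \eqref{e:fraclapphikb} no longer has a term with the clean $(x+i)^k/(1+x^2)^{1+k/2}$ structure (that term has been absorbed into the $\arg\sinh$ expression). Distributing $-\cos(\gamma\pi/2)$ over the three summands of \eqref{e:fraclapphikb} flips their signs and reproduces the $\cos(\gamma\pi/2)$-bracket stated, while the $\sin(\gamma\pi/2)\varphi_k'(x)$ term directly yields $-\sin(\gamma\pi/2)\cdot ik(x+i)^k/(1+x^2)^{1+k/2}$.

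For the two versions at $x=\cot(s)$, I would either substitute $x=\cot(s)$ into the formulas just obtained, or, more cleanly, repeat the same substitution with \eqref{e:fraclapphiksa} and \eqref{e:fraclapphiksb} playing the role of \eqref{e:fraclapphika} and \eqref{e:fraclapphikb}. The needed simplifications are $1+\cot^2(s) = 1/\sin^2(s)$, $\varphi_k(\cot(s)) = e^{iks}$, and hence $\varphi_k'(\cot(s)) = -ik\sin^2(s)e^{iks}$ and $(\cot(s)+i)^2/(1+\cot^2(s)) = e^{i2s}$. The combined term in the compact form becomes $ke^{-i\gamma\pi/2}\sin^2(s)e^{iks}$, and the hypergeometric argument becomes $e^{i2s}$, giving the $\cot(s)$ analogue of \eqref{e:Donegphika}; the same substitution in the finite-sum version produces \eqref{e:Donegphiksb}. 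There is no real obstacle; the only place to be careful is the sign combination $\cos(\gamma\pi/2) - i\sin(\gamma\pi/2) = e^{-i\gamma\pi/2}$ in the merging step, and keeping the $\sgn(k)$ factors straight in the finite-sum formula (which are inherited unchanged from \eqref{e:fraclapphikb}).
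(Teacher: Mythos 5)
Your proposal is correct and follows exactly the paper's own proof, which likewise just applies \eqref{e:D1g} to \eqref{e:fraclapphika}, \eqref{e:fraclapphikb}, \eqref{e:fraclapphiksa} and \eqref{e:fraclapphiksb}, using $\varphi_k'(x)$ from \eqref{e:phikphikp} and $\varphi_k'(\cot(s)) = -ik\sin^2(s)e^{iks}$. The only nontrivial step, merging $\cos(\gamma\frac\pi2) - i\sin(\gamma\frac\pi2) = e^{-i\gamma\frac\pi2}$ on the common factor $k(x+i)^k/(1+x^2)^{1+k/2}$, is identified and carried out correctly.
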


\begin{proof}

We apply  \eqref{e:D1g}, for $u(x) = \varphi_k(x)$, to \eqref{e:fraclapphika}, \eqref{e:fraclapphikb}, \eqref{e:fraclapphiksa} and \eqref{e:fraclapphiksb}, respectively. $\varphi'(x)$ is given by \eqref{e:phikphikp}, and, evaluating 
\eqref{e:phikphikp} at $x = \cot(s)$, we get $\varphi_k'(\cot(s)) = -ik\sin^2(s)e^{iks}$.

\end{proof}

To finish this section, we offer, for the sake of completeness, explicit expressions of $\Dag$ applied to the complex Christov functions $\mu_k(x)$.

\begin{corollary}
	
Let $\mu_k(x)$ be defined as in \eqref{e:muk},  $\alpha\in(0,2)$, $k\in\mathbb{Z}$ and $x\in\mathbb{R}$. Then,
\begin{equation*}
\Dag[\mu_{k}](x) \equiv -e^{i\sgn(k)\gamma\frac\pi2}(-\Delta)^{\alpha/2}\mu_{k}(x),
\end{equation*}
where $(-\Delta)^{\alpha/2}\mu_k(x)$ is given by \eqref{e:fraclapmuk}.

\end{corollary}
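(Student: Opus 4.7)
The proof is a routine adaptation of the argument used for Corollary \ref{cor:Damuk}. The starting point is the decomposition $\mu_k(x) \equiv (\lambda_k(x) - \lambda_{k+1}(x))/2$ from \eqref{e:muk}, combined with the linearity of the Riesz--Feller operator $\Dag$. Applying identity \eqref{e:Daglk} to each of the two summands yields
$$
\Dag[\mu_k](x) = \frac{\Dag[\lambda_k](x) - \Dag[\lambda_{k+1}](x)}{2} = -\frac{e^{i\sgn(k)\gamma\frac\pi2}(-\Delta)^{\alpha/2}\lambda_k(x) - e^{i\sgn(k+1)\gamma\frac\pi2}(-\Delta)^{\alpha/2}\lambda_{k+1}(x)}{2}.
$$

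For $k\notin\{0,-1\}$, the indices $k$ and $k+1$ have the same sign, so the common exponential factor $-e^{i\sgn(k)\gamma\pi/2}$ can be pulled out, and what remains is $(-\Delta)^{\alpha/2}(\lambda_k-\lambda_{k+1})/2 = (-\Delta)^{\alpha/2}\mu_k(x)$, giving the claimed identity. The final substitution of \eqref{e:fraclapmuk} then produces the explicit closed form.

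The only delicate point, and the place where I would spend any care, is the edge cases $k=0$ and $k=-1$, where $\sgn(k)\ne\sgn(k+1)$. Here the argument is exactly the one used in the proof of Corollary \ref{cor:Damuk}: when $k=0$, $\lambda_k\equiv 1$ so $\Dag[\lambda_k]\equiv 0$ and $(-\Delta)^{\alpha/2}\lambda_k\equiv 0$, and only the $\lambda_{k+1}=\lambda_1$ contribution survives; symmetrically, when $k=-1$, $\lambda_{k+1}\equiv 1$ vanishes and only the $\lambda_{-1}$ term contributes. In both cases the sign convention of the surviving term is unambiguously that of the nonzero index, and the identity holds as stated.

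I anticipate no substantive obstacle: once one has \eqref{e:Daglk} and the linearity of $\Dag$, the corollary is essentially a one-line computation plus a sentence on the two degenerate indices. The analogous identities $\Dag[\mu_k]\equiv -e^{i\sgn(k)\gamma\pi/2}(-\Delta)^{\alpha/2}\mu_k$ on the three regimes $\alpha\in(0,1)$, $\alpha=1$, and $\alpha\in(1,2)$ follow from the three corresponding regimes of \eqref{e:Daglk} with no change of argument, so a single unified proof covers $\alpha\in(0,2)$.
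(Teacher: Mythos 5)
Your proposal is correct and follows essentially the same route as the paper's own proof: decompose $\mu_k = (\lambda_k - \lambda_{k+1})/2$, apply linearity and \eqref{e:Daglk}, and treat $k=0$ and $k=-1$ separately using $(-\Delta)^{\alpha/2}1 = 0$. No substantive difference.
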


\begin{proof}
Identical to that of Corollary \ref{cor:Damuk}:
	\begin{align*}
		\Dag[\mu_{k}](x) & = \Dag\left[\frac{\lambda_k - \lambda_{k+1}}{2}\right](x) = \frac{\Dag[\lambda_{k}](x) - \Dag[\lambda_{k+1}](x)}{2}
		\cr
		& = -\frac{e^{i\sgn(k)\gamma\frac\pi2}(-\Delta)^{\alpha/2}\lambda_{k}(x) - e^{-i\sgn(k+1)\gamma\frac\pi2}(-\Delta)^{\alpha/2}\lambda_{k+1}(x)}{2}.
		\cr
		& = -e^{i\sgn(k)\gamma\frac\pi2}(-\Delta)^{\alpha/2}\frac{\lambda_{k}(x) - \lambda_{k+1}(x)}{2} = -e^{i\sgn(k)\gamma\frac\pi2}(-\Delta)^{\alpha/2}\mu_k(x),
	\end{align*}
	where we have used again that, when $k \not = 0$ and $k \not = -1$, $\sgn(k) \equiv \sgn(k+1)$, when $k = 0$, $(-\Delta)^{\alpha/2}\lambda_{k}(x) = (-\Delta)^{\alpha/2}1 = 0$, and, when $k = -1$, $(-\Delta)^{\alpha/2}\lambda_{k+1}(x) = (-\Delta)^{\alpha/2}1 = 0$.
	
\end{proof}

\section{Numerical experiments}

\label{s:numerical}

\subsection{A pseudospectral approach}

\label{s:PS}

Given a continuous bounded function $u(x)$, such that its limits as $x\to\pm\infty$ are the same, we perform the change of variable $x = L\cot(s)$, where $s\in(0,\pi)$, and $L > 0$ is a scaling factor, and define $U(s) \equiv u(L\cot(s))$, which maps the unbounded domain $\mathbb R$ into a bounded one, hence avoiding truncation of the domain. Then, we expand $U(s)$, which we regard as $\pi$-periodic, as a Fourier series:
\begin{equation*}
U(s) = \sum_{k = -\infty}^{\infty}\hat u(k)e^{i2ks},
\end{equation*}
or, in terms of $x$,
\begin{equation*}
u(x) = U\left(\arccot\left(\frac xL\right)\right) = \sum_{k = -\infty}^{\infty}\hat u(k)e^{i2k\arccot(x/L)} = \sum_{k = -\infty}^{\infty}\hat u(k)\lambda_k\left(\frac xL\right),
\end{equation*}
where the multivalued function $\arccot(x)$ is defined in such a way that it takes values on $(0,\pi)$; note that we can evaluate it numerically by doing $\arccot(x) = \pi/2 - \arctan(x)$. Moreover, introducing the notation
\begin{equation*}
\lambda_{k,L}(x) \equiv \lambda_k(x/L)  \equiv \left(\frac{ix - L}{ix + L}\right)^{k} \equiv \frac{(x + iL)^{2k}}{(L^2 + x^2)^k}, \quad k\in\mathbb Z,
\end{equation*}
we can write
\begin{equation}
\label{e:seriesfx}
u(x)  = \sum_{k = -\infty}^{\infty}\hat u(k)\lambda_{k,L}(x).
\end{equation}
In this paper, we follow a pseudospectral approach, i.e., we approximate \eqref{e:seriesfx} as a finite sum:
\begin{equation}
\label{e:seriesfxPS}
u(x)  \approx \sum_{k = -\lfloor N/2\rfloor}^{\lceil N/2\rceil-1}\hat u_k\lambda_{k,L}(x),
\end{equation}
where $\lfloor\cdot\rfloor$ and $\lceil\cdot\rceil$ denote respectively the floor and ceiling functions, and the Fourier coefficients $\hat u_k$ of the pseudospectral approximation, which are not identical to the Fourier coefficients $\hat u(k)$ in \eqref{e:seriesfx}, are determined by imposing the equality at $N$ given nodes $x_j = L\cot(s_j)$:
$$
u(x_j) \equiv \sum_{k = -\lfloor N/2\rfloor}^{\lceil N/2\rceil-1}\hat u_k\lambda_{k,L}(x_j) \Longleftrightarrow U(s_j) \equiv \sum_{k = -\lfloor N/2\rfloor}^{\lceil N/2\rceil -1}\hat u_k\lambda_{k,L}(L\cot(s_j)) \equiv \sum_{k = -\lfloor N/2\rfloor}^{\lceil N/2\rceil-1}\hat u_ke^{i2ks_j},
$$
where we have used that $\lambda_{k,L}(L\cot(s)) \equiv \lambda_k(\cot(s)) \equiv e^{i2ks}$. Hence, choosing
\begin{equation}
\label{e:nodessj}
s_j = \frac{\pi(2j+1)}{2N},
\end{equation}
for $j\in\{0, \ldots, N-1\}$, we have that
\begin{align*}
	U(s_j) & = \sum_{k = -\lfloor N/2\rfloor}^{\lceil N/2\rceil-1}\left[e^{i\pi k/N}\hat u_k\right]e^{i2\pi jk/N}
	\cr
	& = \sum_{k = 0}^{\lceil N/2\rceil-1}\left[e^{i\pi k/N}\hat u_k\right]e^{i2\pi jk/N} + \sum_{k = \lceil N/2\rceil}^{N-1}\left[e^{i\pi (k-N)/N}\hat u_k\right]e^{i2\pi jk/N},
\end{align*}
so the values $\{U(s_j)\}$ are given by the inverse discrete Fourier transform (IDFT) of 
\begin{equation*}
	\Big\{N\hat u_0, Ne^{i\pi/N}\hat u_1, \ldots, Ne^{i\pi(\lceil N/2\rceil-1)/N}\hat u_{\lceil N/2\rceil-1},
	Ne^{-i\pi\lceil N/2\rceil/N}\hat u_{-\lceil N/2\rceil}, \ldots, Ne^{-i\pi/N}\hat u_{-1}\Big\};
\end{equation*}
and, conversely, 
\begin{equation}
\label{e:DFT}
\hat u_k = \frac{e^{-i\pi k/N}}{N}\sum_{j = 0}^{N-1}U(s_j)e^{-i2\pi jk/N}, \quad k\in\{-\lfloor N/2\rfloor, \ldots, \lceil N/2\rceil - 1\},
\end{equation}
i.e., the Fourier coefficients $\{\hat u_k\}$ are given by the discrete Fourier transform (DFT) of $\{U(s_j)\}$, multiplying the result by $e^{-i\pi k/N}/N$, for $k\in\{-\lfloor N/2\rfloor, \ldots, \lceil N/2\rceil - 1\}$. The DFT and IDFT can be computed very efficiently \cite{FFT} by means of the fast Fourier transform (FFT) and of the inverse fast Fourier transform (IFFT), respectively. Moreover, as is usual, we apply a Krasny filter \cite{krasny} to the Fourier coefficents $\hat u_k$, by imposing $\hat u_k \equiv 0$ to those $\hat u_k$, such that $|\hat u_k| \le \varepsilon\ll1$, where $\varepsilon$ is an infinitesimally small positive real value; in this paper, we have taken $\varepsilon = 2^{-52} = 2.2204\ldots\times10^{-16}$.

At this point, we apply $\Da$, $\barDa$, $\partial_x\Db$, $\partial_x\barDb$, and $\Dag$ to \eqref{e:seriesfxPS}, and the problem is reduced respectively to the computation of $\Da[\lambda_{k,L}](x)$, $\barDa[\lambda_{k,L}](x)$, $\partial_x\Db[\lambda_{k,L}](x)$, $\partial_x\barDb[\lambda_{k,L}](x)$, and $\Dag[\lambda_{k,L}](x)$. Furthermore, it is easy to check that
\begin{equation}
\label{e:lklvslka}
\begin{aligned}
\Da[\lambda_{k,L}](L\cot(s)) & \equiv \frac{1}{L^\alpha}\Da[\lambda_k](\cot(s)), \quad\alpha\in(0,1),
	\cr
\barDa[\lambda_{k,L}](L\cot(s)) & \equiv \frac{1}{L^\alpha}\barDa[\lambda_k](\cot(s)), \quad\alpha\in(0,1),
	\cr
\partial_x\Db[\lambda_{k,L}](L\cot(s)) & \equiv \frac{1}{L^\alpha}\Da[\lambda_k](\cot(s)), \quad\alpha\in(1,2),
	\cr
\partial_x\barDb[\lambda_{k,L}](L\cot(s)) & \equiv \frac{1}{L^\alpha}\barDa[\lambda_k](\cot(s)), \quad\alpha\in(1,2),
	\cr
\Dag[\lambda_{k,L}](L\cot(s)) & \equiv \frac{1}{L^\alpha}\Dag[\lambda_k](\cot(s)), \quad\alpha\in(0,2).
\end{aligned}
\end{equation}
Therefore, from \eqref{e:seriesfxPS}, \eqref{e:lklvslka}, \eqref{e:dalphalkvsflalk} and \eqref{e:Daglk}, we conclude that
\begin{equation}
	\label{e:lklvslk}
	\begin{aligned}
		\Da[u](L\cot(s)) & \approx \frac{1}{L^\alpha}\sum_{k = -\lfloor N/2\rfloor}^{\lceil N/2\rceil-1}\hat u_ke^{-i\sgn(k)\alpha\frac\pi2}(-\Delta)^{\alpha/2}\lambda_{k}(\cot(s)), \quad\alpha\in(0,1),
		\cr
		\barDa[u](L\cot(s)) & \approx -\frac{1}{L^\alpha}\sum_{k = -\lfloor N/2\rfloor}^{\lceil N/2\rceil-1}\hat u_ke^{i\sgn(k)\alpha\frac\pi2}(-\Delta)^{\alpha/2}\lambda_{k}(\cot(s)), \quad\alpha\in(0,1),
		\cr
		\partial_x\Db[u](L\cot(s)) & \approx \frac{1}{L^\alpha}\sum_{k = -\lfloor N/2\rfloor}^{\lceil N/2\rceil-1}\hat u_ke^{-i\sgn(k)\alpha\frac\pi2}(-\Delta)^{\alpha/2}\lambda_{k}(\cot(s)), \quad\alpha\in(1,2),
		\cr
		\partial_x\barDb[u](L\cot(s)) & \approx \frac{1}{L^\alpha}\sum_{k = -\lfloor N/2\rfloor}^{\lceil N/2\rceil-1}\hat u_ke^{i\sgn(k)\alpha\frac\pi2}(-\Delta)^{\alpha/2}\lambda_{k}(\cot(s)), \quad\alpha\in(1,2),
		\cr
		\Dag[u](L\cot(s)) & \approx -\frac{1}{L^\alpha}\sum_{k = -\lfloor N/2\rfloor}^{\lceil N/2\rceil-1}\hat u_ke^{i\sgn(k)\gamma\frac\pi2}(-\Delta)^{\alpha/2}\lambda_{k}(\cot(s)), \quad\alpha\in(0,2).
	\end{aligned}
\end{equation}
Observe that the expressions for $\Da[u](L\cot(s))$ and $\partial_x\Db[u](L\cot(s))$ are identical, whereas the expressions for $\barDa[u](L\cot(s))$ and $\partial_x\barDb[u](L\cot(s))$ differ only in the sign.

\subsection{Numerical implementation}

As we have seen in \eqref{e:lklvslk}, the numerical approximations we are interested in are reduced to the computation of the fractional Laplacian of $\lambda_k(x)$. One option is to evaluate the corresponding instances of ${}_2F_1$ (see \eqref{e:fraclap2F1}), for which we can use the techniques in \cite{cayamacuestadelahoz2020} involving the use of variable precision arithmetic (vpa). In this regard, let us remember that a direct evaluation of ${}_2F_1$ in software packages like Matlab and Mathematica \cite{mathematica} may yield erroneous results, whereas using vpa avoids this problem, but at the expense of a high computational cost.

On the other hand, in the case with $\alpha = 1$, which is only applicable in the case of Riesz-Feller operators, we have from \eqref{e:lklvslk} and \eqref{e:fraclaplks} that
$$
D_\gamma^1[u](L\cot(s)) \approx -\frac{2\sin^2(s)}{L}\sum_{k = -\lfloor N/2\rfloor}^{\lceil N/2\rceil-1}\hat u_ke^{i\sgn(k)\gamma\frac\pi2}|k|e^{i2ks},
$$
whose implementation is straightforward. Moreover, unlike in the $\alpha\not=1$ case, we do know now an explicit formula also for $D^1_\gamma[\varphi_k](x)$ with $k$ odd, namely \eqref{e:Donegphika}. Therefore, this enables us to approximate numerically $D^1_\gamma[u](x)$, for $u$ such that $\lim_{x\to-\infty}u(x) \not= \lim_{x\to\infty}u(x)$. In that case, $U(s) \equiv u(L\cot(s))$ cannot be regarded as being periodic of period $\pi$, so we must extend it to $s\in[0,2\pi]$ in such a way that $U(s)$ is periodic of period $2\pi$. Then,
\begin{equation*}
U(s) = \sum_{k = -\infty}^{\infty}\hat u(k)e^{iks} \Longrightarrow u(x) = U\left(\arccot\left(\frac xL\right)\right) = \sum_{k = -\infty}^{\infty}\hat u(k)\varphi_k\left(\frac xL\right),
\end{equation*}
and, reasoning as in Section \ref{s:PS},
\begin{equation}
\label{e:Dgonef}
D_\gamma^1[u](L\cot(s)) \approx \frac1L \sum_{k = -N}^{N-1}\hat u_kD_\gamma^1[\varphi_k](\cot(s)),
\end{equation}
where the $2N$ Fourier coefficients $\hat u_k$ are determined by imposing 
$$
U(s_j) \equiv \sum_{k = -N}^{N-1}\hat u_ke^{iks_j};
$$
with $s_j$ being given again by \eqref{e:nodessj}, but taking now $j\in\{0, \ldots, 2N-1\}$. Then, in order to implement \eqref{e:Dgonef}, we consider separately the even and odd values of $k$. In the $k$ even case, bearing in mind that $\varphi_k(x)\equiv\lambda_{k/2}(x)$, we have from \eqref{e:Doneglks} that
$$
D^1_\gamma[\varphi_{k}](\cot(s)) = -e^{i\sgn(k)\gamma\frac\pi2}|k|\sin^2(s)e^{iks}, \quad \text{$k$ even},
$$
and in the $k$ odd case, it is important to remark that, although \eqref{e:Donegphika} does contain an instance of ${}_2F_1$ whose definition involves an a priori infinitely sum, it can be reformulated as an expression containing a finite number of terms \cite{CuestaDelaHozGirona2023}, getting the equivalent expression \eqref{e:Donegphiksb} for $D^1_\gamma[\varphi_k](\cot(s))$. This fact allows to apply a fast convolution algorithm in \eqref{e:Dgonef}, yielding a very fast method that allows considering very large values of $N$. Since, it is straightforward to adapt the techniques in \cite{CuestaDelaHozGirona2023} to compute \eqref{e:Dgonef}, we refer to  \cite{CuestaDelaHozGirona2023} for the details.

In this paper, we will focus on the $\alpha \not= 1$ case, and will use \eqref{e:fraclaplks}, to approximate numerically \eqref{e:lklvslk}. As in \cite{cayamacuestadelahoz2021}, the idea is to construct an operational matrix $\Ma\in\mathcal M_{N\times N}(\mathbb C)$, such that
\begin{equation}
	\label{e:Ma}
	\begin{pmatrix}
		(-\Delta)^{\alpha/2} u(s_0) \\ \vdots \\ (-\Delta)^{\alpha/2} u(s_{N-1})
	\end{pmatrix}
	\approx \Ma\cdot
	\begin{pmatrix}
		\hat u_0 \\ \vdots \\ \hat u_{\lceil N/2\rceil -1} \\ \hat u_{-\lfloor N/2\rfloor} \\ \vdots \\ \hat u_{-1}
	\end{pmatrix},
\end{equation}
but, unlike in \cite{cayamacuestadelahoz2021}, we will consider only functions $u$ such that $U(s) \equiv u(L\cot(s))$ can be regarded as periodic, and we will not extend $U(s)$ to $s\in[0,2\pi]$. Therefore, $\Ma$ is of size $N\times N$ and not $(2N)\times(2N)$, as in \cite{cayamacuestadelahoz2021}. More precisely, the first $\lceil N/2\rceil$ columns of $\Ma$ are $(-\Delta)^{\alpha/2}\lambda_0(s_j), \ldots, (-\Delta)^{\alpha/2}\lambda_{\lceil N/2\rceil-1}(s_j)$, and the last $\lfloor N/2\rfloor$ ones, $(-\Delta)^{\alpha/2}\lambda_{-\lceil N/2\rceil}(s_j), \ldots, (-\Delta)^{\alpha/2}\lambda_{-1}(s_j)$, in that order, for $j\in\{0, \ldots, N-1\}$. Exceptionally, when $N$ is even, it is possible to assigned to the $N/2+1$th column either $(-\Delta)^{\alpha/2}\lambda_{-N/2}(s_j)$ or $(-\Delta)^{\alpha/2}\lambda_{N/2}(s_j)$, because in \eqref{e:Ma}, that column is being multiplied by $\hat u_{-N/2}$, and, from \eqref{e:DFT}, $\hat u_{-N/2}\equiv \hat u_{N/2}$; therefore, in order to avoid the choice, we have taken the $N/2+1$th column to be an all-zero vector, when $N$ is even. We also observe that, since $(-\Delta)^{\alpha/2}\lambda_0(s_j) = (-\Delta)^{\alpha/2}1 = 0$, the first column is always an all-zero vector. Moreover, from \eqref{e:fraclap2F1}, $(-\Delta)^{\alpha/2}\lambda_{-k}(\cot(s_j)) \equiv \overline{(-\Delta)^{\alpha/2}\lambda_k(\cot(s_j))}$, so, in order to generate $\Ma$, it is enough to compute $(-\Delta)^{\alpha/2}\lambda_{k}(\cot(s_j))$ for $k \in\{1, \ldots, \lceil N/2\rceil-1\}$, and obtain $(-\Delta)^{\alpha/2}\lambda_{k}(\cot(s_j))$ for $k \in\{-\lceil N/2\rceil+1, \ldots, -1\}$ by conjugation.

In what follows, we explain briefly how to construct $\Ma$, omitting some of the details, which can be consulted in \cite{cayamacuestadelahoz2021}. More precisely, we evaluate \eqref{e:fraclaplks}, for $\alpha\not=1$, at the nodes $s_j$ given by \eqref{e:nodessj}. Then, we observe that, given a number $l\in\mathbb Z$, we can write it as $l = l_1N + l_2$, where $l_1\in\mathbb Z$, and $l_2\in\{-\lfloor N/2\rfloor, \ldots, \lceil N/2\rceil-1\}$. Hence, 
\begin{equation}
\label{e:aliasing}
e^{i2ls_j} = e^{i2(l_1N+l_2)\pi(2j+1)/(2N)} = (-1)^{l_1}e^{i2l_2s_j},
\end{equation}
i.e., aliasing occurs when evaluating $e^{i2ls}$ at the nodes $s_j$. Therefore, replacing $l$ by $l_1N + l_2$, introducing a double sum in \eqref{e:fraclaplks}, truncating $l_1$ by taking $l_1\in\{-l_{lim}, \ldots, l_{lim}\}$, and bearing in mind \eqref{e:aliasing}, we get the following approximation, for $k\in\{1, \ldots, \lceil N/2\rceil-1\}$:
\begin{align}
	\label{e:thanot1truncated}
	(-\Delta)^{\alpha/2}\lambda_k(\cot(s_j)) & \approx \frac{c_{\alpha}\sin^{\alpha-1}(s_j)}{2\tan(\alpha\frac{\pi}{2})}\sum_{l_2=-\lfloor N/2\rfloor}^{\lceil N/2\rceil-1}\Bigg[
	\sum_{l_1=-l_{lim}}^{l_{lim}}(-1)^{l_1}v_1(l_1N+l_2)
	\cr
	& \qquad \times ((1-\alpha)k^2-2k(l_1N+l_2))v_2(k-l_1N-l_2)\Bigg]e^{i2l_2s_j},
\end{align}
where the functions $v_1(\cdot)$ and $v_2(\cdot)$ are defined for $p\in\mathbb N\cup\{0\}$ as follows:
\begin{align*}
	v_1(p) & \equiv \frac{\Gamma\left(\frac{-1+\alpha}{2}+p\right)}{\Gamma\left(\frac{3-\alpha}{2}+p\right)} \equiv \frac{\Gamma\left(\frac{-1+\alpha}{2}\right)}{\Gamma\left(\frac{3-\alpha}{2}\right)}\prod_{m=0}^{p-1}\frac{\frac{-1+\alpha}{2}+m}{\frac{3-\alpha}{2}+m},
	\cr
	v_2(p) & \equiv \frac{\Gamma\left(\frac{-1-\alpha}{2}+p\right)}{\Gamma\left(\frac{3+\alpha}{2}+p\right)} \equiv \frac{\Gamma\left(\frac{-1-\alpha}{2}\right)}{\Gamma\left(\frac{3+\alpha}{2}\right)}\prod_{m=0}^{p-1}\frac{\frac{-1-\alpha}{2}+m}{\frac{3+\alpha}{2}+m}.
\end{align*}
Note that the definitions of $v_1(p)$ and $v_2(p)$ containing the product sign follow from applying $\Gamma(1+z)=z\Gamma(z)$ recursively. In fact, in order to compute  \eqref{e:thanot1truncated} efficiently, we precompute $v_1(p)$ and $v_2(p)$ recursively starting for $p$, and at least for $0 \le p \le l_{lim}N + \lfloor N/2\rfloor$ and $0 \le p \le l_{lim}N + N - 1$, respectively, and store those values in corresponding arrays.

On the other hand, we only need to compute \eqref{e:thanot1truncated} for $j \in\{0, \ldots, \lceil N/2\rceil-1\}$, and extend the results to the values $j \in\{\lceil N/2\rceil, \ldots, N-1\}$, by using the fact that $e^{i2l_2s_{N-1-j}} \equiv e^{-i2l_2s_j}$, i.e., the last row of $\Ma$ is the conjugate of the first one, the last but one row, the conjugate of the second one, and so on.

The matrix $\Ma$ constructed in this way is equivalent to the matrix $\Ma$ developed in \cite{cayamacuestadelahoz2021} for $L = 1$, after removing the last $N/2$ rows and the columns corresponding to odd values of $k$, so the remarks on the convergence of \eqref{e:thanot1truncated}, and on the choice of $l_{lim}$ apply here, too.

Finally, in order to approximate numerically the formulas in \eqref{e:lklvslk}, we divide all the entries of $\Ma$ by $L^\alpha$, and multiply the columns of $\Ma$ corresponding to positive values of $k$ by $e^{-i\alpha\pi/2}$, $-e^{i\alpha\pi/2}$, $e^{-i\alpha\pi/2}$, $e^{i\alpha\pi/2}$ and $-e^{i\gamma\pi/2}$, respectively, and those corresponding to negative values of $k$, by $e^{i\alpha\pi/2}$, $-e^{-i\alpha\pi/2}$, $e^{i\alpha\pi/2}$, $e^{-i\alpha\pi/2}$ and $-e^{-i\gamma\pi/2}$, respectively.

In Listing \ref{code:fraclapMa}, we offer a full Matlab code, \verb|fraclapMa.m|, containing the function \verb|fraclapMa| that generates $\Ma$. This function has been created by following exactly the steps explained in this section, and, for the sake of completeness, we have added the $\alpha=1$ case, which is reduced to one single extra line based on \eqref{e:fraclaplks} for $\alpha = 1$. Moreover, in Section \ref{s:numerical}, we offer a Matlab example to illustrate how to invoke \verb|fraclapMa| to approximate numerically \eqref{e:lklvslk}.

\lstinputlisting[label=code:fraclapMa, language=Matlab, basicstyle=\footnotesize, caption = {Computation of the matrix $\M_\alpha$, when $U(s)$ is not necessarily periodic of period $\pi$}]{fraclapMa.m}

\subsection{Numerical experiments}

The main drawback of using $\Ma$ with $\alpha\not=1$ to compute \eqref{e:lklvslk} is that it must be applied to Fourier coefficients $\{\hat u_k\}$ such that $U(s) \equiv u(L\cot(s))$ can be regarded as $\pi$-periodic. On the other hand, the numerical methods developed in this paper may be applied to functions $u(x)$ such that $\lim_{x\to-\infty}u(x)\not= \lim_{x\to\infty}u(x)$, by subtracting a function $v(x)$, such that $w(x) = u(x) - v(x)$ satisfies $\lim_{x\to-\infty}w(x)= \lim_{x\to\infty}w(x)$ and the expression of the corresponding operator applied to $v(x)$ is explicitly known. For instance, if we know an analytic expression of $\Dag[v](x)$, then $\Dag[u](x) = \Dag[w](x) + \Dag[v](x)$, and we only need to approximate numerically $\Dag[w](x)$ by the techniques explained above. This is also valid for the numerical approximation of $\Da[u](x)$, $\barDa[u](x)$, $\partial_x\Db[u](x)$, $\partial_x\barDb[u](x)$, and for $(-\Delta)^{\alpha/2}u(x)$, too.

The idea of adding and subtracting a function $V(s)\equiv v(L\cot(s))$, to turn a nonperiodic function $U(s)\equiv u(L\cot(s))$ into a periodic one $W(s)\equiv w(L\cot(s)) \equiv u(L\cot(s)) - v(L\cot(s))$ has not been previously used in the $\alpha\not=1$ case, but has been recently tested in the $\alpha=1$ case in \cite{CuestaDelaHozGirona2023}, as an alternative approach to approximate numerically $(-\Delta)^{1/2}u(L\cot(s))$. However, unlike in the operators described in this paper for $\alpha\not=1$, $(-\Delta)^{1/2}\varphi_k(x)$ was known in \cite{CuestaDelaHozGirona2023} also for $k$ odd (see \eqref{e:fraclapphika}-\eqref{e:fraclapphiksb} in this paper), which enabled, e.g., to construct an interpolating function $V(s)$ such that $W(s)$ has a higher global regularity, and offered a good alternative to extending periodically $U(s)$ to $s\in[0,2\pi]$. On the other hand, as explained also in \cite{CuestaDelaHozGirona2023}, there are situations where the extension of $U(s)$ to $s\in[0,2\pi]$, and its Fourier expansion in terms of $e^{iks}$, using both even and odd values of $k$, is desirable, e.g., in evolution problems in which the asymptotic behavior of $u(x)$ is not exactly known as $x\to\pm\infty$.

In this paper, in order to illustrate this idea, we have taken $v(x)=\arctan(x)$ as an auxiliary function, because we can obtain explicit expressions for its Weyl-Marchaud derivatives by means of, e.g., Mathematica. More precisely, in order to calculate $\Da[\arctan](x)$ and $\barDa[\arctan](x)$ for $\alpha\in(0,1)$, we use respectively \eqref{Int:Dalpha}, with the change of variable $y = x - z$ inside the integral, and \eqref{Int:Dalpha:bar}, with the change of variable $y = x + z$. Therefore, we type
\begin{verbatim}
Integrate[ArcTan'[y]/(x-y)^a,{y,-Infinity,x}]/Gamma[1-a]
Integrate[ArcTan'[y]/(y-x)^a,{y,x,Infinity}]/Gamma[1-a]
\end{verbatim}
which yield, after some rewriting,
\begin{align*}
\Da[\arctan](x) & = \Gamma(\alpha)(1+x^2)^{-\alpha/2}\sin\left(\alpha\frac\pi2 + \alpha\arctan(x)\right),
	\cr
\barDa[\arctan](x) & = \Gamma(\alpha)(1+x^2)^{-\alpha/2}\sin\left(\alpha\frac\pi2 - \alpha\arctan(x)\right);
\end{align*}
note that Mathematica does not give exactly the same formula when $x > 0$ or when $x < 0$, but they are equivalent. Then, if we differentiate $\Da[\arctan](x)$ and $\barDa[\arctan](x)$ with respect to $x$, and substitute $\alpha$ by $\alpha-1$, we get, when $\alpha\in(1,2)$,
\begin{align*}
\partial_x\Db[\arctan](x) & = \Gamma(\alpha)(1+x^2)^{-\alpha/2}\sin\left(\alpha\frac\pi2 + \alpha\arctan(x)\right),
	\cr
\partial_x\barDb[\arctan](x) & = -\Gamma(\alpha)(1+x^2)^{-\alpha/2}\sin\left(\alpha\frac\pi2 - \alpha\arctan(x)\right),
\end{align*}
i.e., the expressions for $\Da[\arctan](x)$ and $\partial_x\Db[\arctan](x)$ are identical, whereas the expressions for $\barDa[\arctan](x)$ and $\partial_x\barDb[\arctan](x)$ differ only in the sign. Introducing these expressions in \eqref{RF:01:integral:representation} and \eqref{RF:integral:representation}, and applying basic trigonometric identities:
\begin{align*}
\Dag[\arctan](x) & = \frac{\Gamma(-\alpha)\Gamma(1+\alpha)\Gamma(\alpha)}{\pi}(1+x^2)^{-\alpha/2}
	\cr
& \qquad \times\Big[\sin\left( (\alpha-\gamma) \frac\pi2\right)\sin\left(\alpha\frac\pi2 + \alpha\arctan(x)\right)
	\cr
& \qquad \qquad - \sin\left( (\alpha+\gamma) \frac\pi2\right) \sin\left(\alpha\frac\pi2 - \alpha\arctan(x)\right)\Big]
	\cr
& = \Gamma(\alpha)(1+x^2)^{-\alpha/2}\sin\left(\gamma\frac\pi2 - \alpha\arctan(x)\right), \quad\alpha\in(0,2).
\end{align*}
Additionally, from \eqref{e:equivDzeroafraclap},
$$
(-\Delta)^{\alpha/2}\arctan(x) = -D_0^\alpha[\arctan](x) = \Gamma(\alpha)(1+x^2)^{-\alpha/2}\sin(\alpha\arctan(x)), \quad\alpha\in(0,2).
$$
Now, we are going to subtract $v(x) = \arctan(x)$ to $u(x) = \erf(x)$, in order to approximate numerically $\Da[\erf](x)$, $\barDa[\erf](x)$, $\partial_x\Db[\erf](x)$, $\partial_x\barDb[\erf](x)$, $\Dag[\erf](x)$ and $(-\Delta)^{\alpha/2}\erf(x)$; recall that
$$
\erf(x) = \frac{2}{\sqrt\pi}\int_{-\infty}^{x}e^{-y^2}dy.
$$
We observe that $\lim_{x\to-\infty}\arctan(x) = -\pi/2$, $\lim_{x\to\infty}\arctan(x) = \pi/2$, $\lim_{x\to-\infty}\erf(x) = -1$, whereas $\lim_{x\to\infty}\erf(x) = 1$, so we define accordingly $u(x) = \erf(x)$, $v(x) = (2/\pi)\arctan(x)$ and $w(x) = u(x) - v(x)$, getting $\lim_{x\to-\infty}w(x) = \lim_{x\to\infty}w(x) = 0$. Therefore, $W(s)\equiv w(L\cot(s))$ can be regarded as $\pi$-periodic.

In order to compute $\Da[\erf](x)$ and $\barDa[\erf](x)$ for $\alpha\in(0,1)$, we type now in Mathematica
\begin{verbatim}
	Integrate[Erf'[y]/(x-y)^a,{y,-Infinity,x}]/Gamma[1-a]
	Integrate[Erf'[y]/(y-x)^a,{y,x,Infinity}]/Gamma[1-a]
\end{verbatim}
which yield, after some rewriting,
\begin{align*}
\Da[\erf](x) & = \frac{2^\alpha}\pi\sin\left(\alpha\frac\pi2\right)\Gamma\left(\frac\alpha2\right)\,{}_1F_1\left(\frac\alpha2,\frac12,-x^2\right)
	\cr
& \qquad + \frac{2^{1+\alpha}}{\pi}\cos\left(\alpha\frac\pi2\right)\Gamma\left(\frac{1+\alpha}2\right)x\,{}_1F_1\left(\frac{1+\alpha}2,\frac32,-x^2\right),
	\cr
\barDa[\erf](x) & = \frac{2^\alpha}\pi\sin\left(\alpha\frac\pi2\right)\Gamma\left(\frac\alpha2\right)\,{}_1F_1\left(\frac\alpha2,\frac12,-x^2\right)
	\cr
& \qquad - \frac{2^{1+\alpha}}{\pi}\cos\left(\alpha\frac\pi2\right)\Gamma\left(\frac{1+\alpha}2\right)x\,{}_1F_1\left(\frac{1+\alpha}2,\frac32,-x^2\right),
\end{align*}
where ${}_1F_1$ is Kummer's confluent hypergeometric function:
\begin{equation*}
{}_{1}F_1(a;b;z)=\sum_{n=0}^{\infty}\frac{(a)_n}{(b)_n}\frac{z^n}{n!}.
\end{equation*}
Then, if we differentiate $\Da[\erf](x)$ and $\barDa[\erf](x)$ with respect to $x$, and substitute $\alpha$ by $\alpha-1$, we get, when $\alpha\in(1,2)$,
\begin{align*}
	\partial_x\Db[\erf](x) & = \frac{2^\alpha}\pi\sin\left(\alpha\frac\pi2\right)\Gamma\left(\frac\alpha2\right)\,{}_1F_1\left(\frac\alpha2,\frac12,-x^2\right)
	\cr
	& \qquad + \frac{2^{1+\alpha}}{\pi}\cos\left(\alpha\frac\pi2\right)\Gamma\left(\frac{1+\alpha}2\right)x\,{}_1F_1\left(\frac{1+\alpha}2,\frac32,-x^2\right),
	\cr
	\partial_x\barDb[\erf](x) & = -\frac{2^\alpha}\pi\sin\left(\alpha\frac\pi2\right)\Gamma\left(\frac\alpha2\right)\,{}_1F_1\left(\frac\alpha2,\frac12,-x^2\right)
	\cr
	& \qquad + \frac{2^{1+\alpha}}{\pi}\cos\left(\alpha\frac\pi2\right)\Gamma\left(\frac{1+\alpha}2\right)x\,{}_1F_1\left(\frac{1+\alpha}2,\frac32,-x^2\right),
\end{align*}
i.e., the expressions for $\Da[\erf](x)$ and $\partial_x\Db[\erf](x)$ are identical, whereas the expressions for $\barDa[\erf](x)$ and $\partial_x\barDb[\erf](x)$ differ only in the sign. Introducing these expressions in \eqref{RF:01:integral:representation} and \eqref{RF:integral:representation}, we conclude after some simplification that
\begin{align*}
\Dag[\erf](x) & = \frac{2^{\alpha}}\pi\Gamma\left(\frac\alpha2\right)\sin\left(\gamma\frac\pi2\right){}_1F_1\left(\frac\alpha2,\frac12,-x^2\right)
	\cr
& \qquad - \frac{2^{1+\alpha}}\pi\Gamma\left(\frac{1+\alpha}2\right)\cos\left(\gamma\frac\pi2\right)x\,{}_1F_1\left(\frac{1+\alpha}2,\frac32,-x^2\right), \quad\alpha\in(0,2).
\end{align*}
Additionally, from \eqref{e:equivDzeroafraclap},
$$
(-\Delta)^{\alpha/2}\erf(x) = -D_0^\alpha[\erf](x) = \frac{2^{1+\alpha}}\pi\Gamma\left(\frac{1+\alpha}2\right)x\,{}_1F_1\left(\frac{1+\alpha}2,\frac32,-x^2\right), \quad\alpha\in(0,2).
$$
In Listing \ref{code:testfractionalLaplacian}, we have approximated numerically $\Da[\erf](x)$, $\barDa[\erf](x)$, $D_\gamma^\alpha[\erf](x)$ and $(-\Delta)^{\alpha/2}\erf(x)$, taking $\alpha = 0.62$ and $\gamma = 0.49$, $N = 256$, $L = 1.1$ and $l_{lim} = 100$, and compared them with their exact expressions; the errors in $L^\infty$-norm are, respectively, $1.6029\times10^{-14}$, $1.6036\times10^{-14}$, $2.0241\times10^{-14}$ and $2.7544\times10^{-14}$.

\lstinputlisting[label=code:testfractionalLaplacian, language=Matlab, basicstyle=\footnotesize, caption = {Numerical approximation of the operators applied to $\erf(x)$}]{testfractionalLaplacian.m}

At this point, it is interesting to see the effect of $L$, whose choice is always delicate. Indeed, although there are some theoretical results \cite{Boyd1982}, a good election of $L$ depends on many factors: number of points, class of functions, type of problem, etc., and a good working rule of thumb seems to be that the absolute value of a given function at the extreme grid points is lower than a certain threshold. Moreover, according to the numerical experiments in \cite{CuestaDelaHozGirona2023}, given a function $u(x)$, the global regularity of its corresponding $U(s)\equiv u(L\cot(s))$ in all $\mathbb R$ (or of $W(s) = U(s) - V(s)$, when it is applicable) plays a fundamental role. For instance, if we have only that $U(s)\in\mathbb C^0(\mathbb R)$, in general larger values of $L$ and of $N$ are required to attain the maximum possible accuracy, and the optimum value of $L$, which grows with $N$, is more difficult to capture, whereas more globally regular functions $U(s)$ require lower values of $N$ to attain the maximum possible accuracy, and this happens for a larger range of values of $L$. In our case, we have approximated numerically $D_\gamma^\alpha[\erf](x)$ for $\alpha = 1.37$, $\gamma = 0.58$, $l_{lim} = 100$, taking $N\in\{2^3, 2^4, \ldots, 2^{14}\}$ and $L\in\{0.01, 0.02, \ldots, 10\}$, and plotted in semilogarithmic scale the $L^\infty$-norm of the errors on the left-hand side of Figure \ref{f:errN16384}. While the results with $L$ close to $0^+$ are nonsensical, accuracies of the order of $\mathcal O(10^{-14})$ are achieved already for $N = 128$, and for a number of values of $L$, which is in agreement with the fact that $W(s) = \erf(L\cot(s)) - (2/\pi)\arctan(L\cot(s))\in\mathbb C^\infty(\mathbb R)$. Observe also that for $N = 512$ and larger, the results are virtually indistinguishable.

\begin{figure}
	\centering
	\includegraphics[width=0.5\textwidth, clip=true]{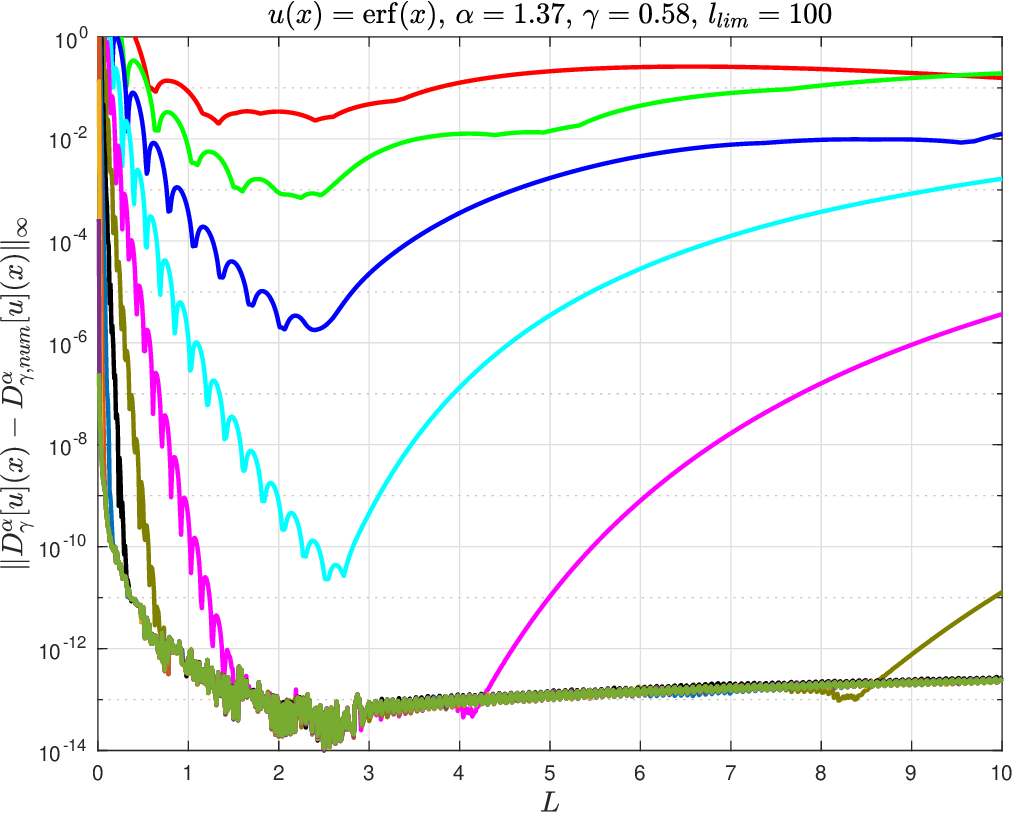}\includegraphics[width=0.5\textwidth, clip=true]{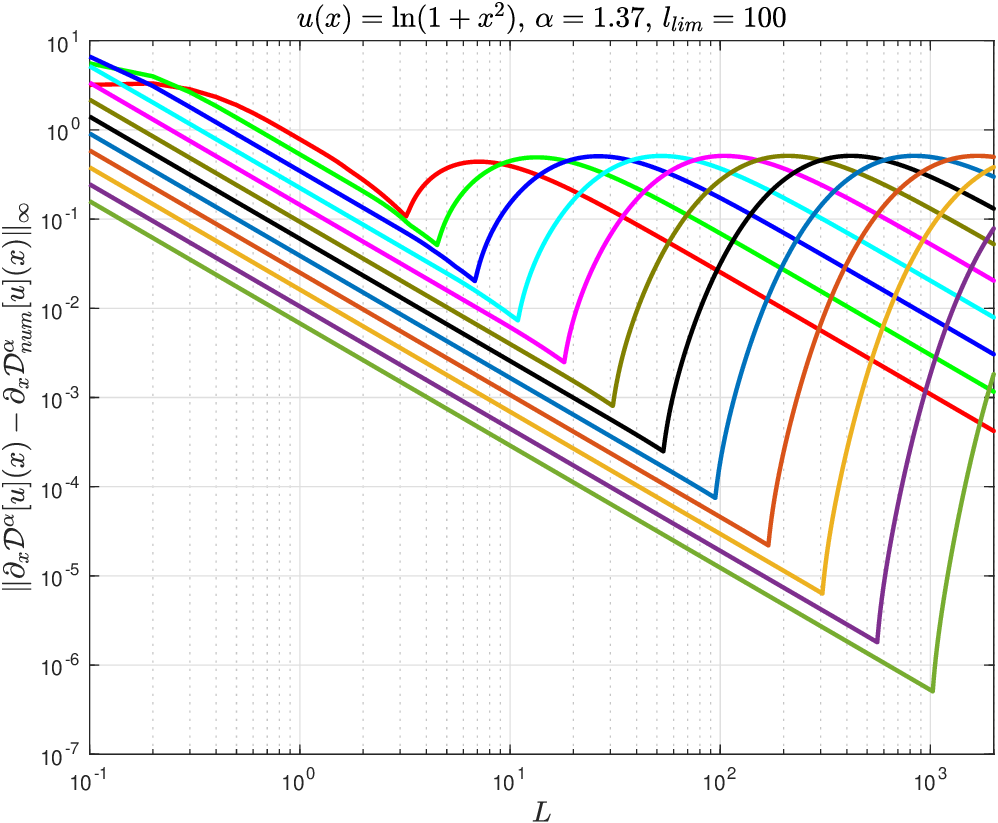}
	\includegraphics[width=\textwidth, clip=true]{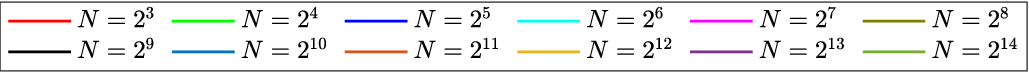}
	\caption{Left: Errors in the numerical approximation of $D_\gamma^\alpha[\erf](x)$ versus $L\in\{0.01, 0.02, \ldots, 10\}$, for $\alpha = 1.37$, $\gamma = 0.58$, $l_{lim} = 100$ and $N\in\{2^3, 2^4, \ldots, 2^{14}\}$. Right: Errors in the numerical approximation of $\partial_x\Db[\ln(1+x^2)]$ versus $L\in\{0.1, 0.2, \ldots, 1000\}$, for $\alpha = 1.37$,  $l_{lim} = 100$ and $N\in\{2^3, 2^4, \ldots, 2^{14}\}.$ }
	\label{f:errN16384}
\end{figure}

On the other hand, we have also considered $u(x) = \ln(1 + x^2)$, i.e., a function that is not bounded and has logarithmic growth as $x\to\pm\infty$; note that $\partial_x\Da[\ln(1+x^2)]$ was also approximated numerically in \cite{delahozcuesta2016} for $\alpha\in(0,1)$, which is equivalent to $\partial_x\Db[\ln(1+x^2)]$ for $\alpha\in(1,2)$ in this paper. In order to compute $\Da[\ln(1+x^2)]$ and $\barDa[\ln(1+x^2)](x)$ for $\alpha\in(0,1)$, we can use again Mathematica, and type
\begin{verbatim}
Integrate[D[Log[1 + y^2], y]/(x-y)^a,{y,-Infinity,x}]/Gamma[1-a]
Integrate[D[Log[1 + y^2], y]/(y-x)^a,{y,x,Infinity}]/Gamma[1-a]
\end{verbatim}
which yield, after some rewriting,
\begin{align*}
	\Da[\ln(1+x^2)] & = -2\Gamma(\alpha)(1+x^2)^{-\alpha/2}\cos\left(\alpha\frac\pi2 + \alpha\arctan(x)\right),
	\cr
	\barDa[\ln(1+x^2)] & = 2\Gamma(\alpha)(1+x^2)^{-\alpha/2}\cos\left(\alpha\frac\pi2 - \alpha\arctan(x)\right),
\end{align*}
Then, if we differentiate $\Da[\ln(1+x^2)]$ and $\barDa[\ln(1+x^2)]$ with respect to $x$, and substitute $\alpha$ by $\alpha-1$, we get, when $\alpha\in(1,2)$,
\begin{align*}
	\partial_x\Db[\ln(1+x^2)] & = -2\Gamma(\alpha)(1+x^2)^{-\alpha/2}\cos\left(\alpha\frac\pi2 + \alpha\arctan(x)\right),
	\cr
	\partial_x\barDb[\ln(1+x^2)] & = -2\Gamma(\alpha)(1+x^2)^{-\alpha/2}\cos\left(\alpha\frac\pi2 - \alpha\arctan(x)\right),
\end{align*}
i.e., the expressions for $\Da[\ln(1+x^2)]$ and $\partial_x\Db[\ln(1+x^2)]$ are identical, whereas the expressions for $\barDa[\ln(1+x^2)]$ and $\partial_x\barDb[\ln(1+x^2)]$ differ only in the sign. Introducing these expressions in \eqref{RF:01:integral:representation} and \eqref{RF:integral:representation}, we conclude after some simplification that
\begin{align*}
\Dag[\ln(1+x^2)] & = -2\frac{\Gamma(-\alpha)\Gamma(1+\alpha)\Gamma(\alpha)}{\pi}(1+x^2)^{-\alpha/2}\bigg[\sin\left((\alpha-\gamma) \frac\pi2\right)\cos\left(\alpha\frac\pi2 + \alpha\arctan(x)\right)
	\cr
& \qquad +  \sin\left( (\alpha+\gamma) \frac\pi2\right)\cos\left(\alpha\frac\pi2 - \alpha\arctan(x)\right)\bigg],
	\cr
& = 2\Gamma(\alpha)(1+x^2)^{-\alpha/2}\cos\left(\gamma\frac\pi2 - \alpha\arctan(x)\right), \quad\alpha\in(0,2).
\end{align*}
Additionally, from \eqref{e:equivDzeroafraclap},
$$
(-\Delta)^{\alpha/2}\ln(1+x^2) = -D_0^\alpha[\ln(1+x^2)] = -2\Gamma(\alpha)(1+x^2)^{-\alpha/2}\cos(\alpha\arctan(x)), \quad\alpha\in(0,2).
$$
Applying the operators to $u(x) = \ln(1+x^2)$ is very challenging, because $U(s) = \ln(1+L^2\cot^2(s))$ is not even continouous on $\mathbb R$, so we can expect rather poor results. For instance, taking $\alpha = 1.12$, $\gamma= 0.83$, $N = 256$, $L = 30$ and $l_{lim}  = 100$, the errors in $L^\infty$ for the numerical approximations of $\partial_x\Db[\ln(1+x^2)]$, $\partial_x\barDb[\ln(1+x^2)]$, $D_\gamma^\alpha[\ln(1+x^2)]$ and $(-\Delta)^{\alpha/2}\ln(1+x^2)$ are, respectively, $5.9765\times10^{-4}$, $5.9765\times10^{-4}$, $5.9097\times10^{-4}$ and $1.1931\times10^{-4}$, and the results are similar for $\alpha\in(1,2)$, and $|\gamma| \leq \min\{\alpha, 2-\alpha\}$. However, we remark that these results can be much improved by increasing $N$ and choosing carefully $L$. To illustrate this, we have plotted on the right-hand side of Figure \ref{f:errN16384}, in log-log scale, $\Da[\ln(1+x^2)]$ for $\alpha = 1.37$, $l_{lim} = 100$, taking $N\in\{2^3, 2^4, \ldots, 2^{14}\}$ and $L\in\{0.1, 0.2, \ldots, 1000\}$. The experiments seem to suggest that the optimum value of $L$ grows exponentially with $N$, and the lowest error, $5.0735\times10^{-7}$, is achieved for $N = 16384$ and $L = 1025.4$. Furthermore, when $N = 16384$ and $L\in[625.7, 1051.3]$, the errors are of the order of $\mathcal O(10^{-7})$. These results are in our opinion very remarkable, and show the adequacy of the methods in this paper to deal with unbounded, discontinuous functions $U(s)$.

\subsection{An evolution example}

To finish this paper, we have simulated the nonlinear Riesz-Feller fractional diffusion equation \eqref{e:fisher0} for $f(u) = u(1-u)$. More precisely, we have considered the following initial value problem:
\begin{equation}
	\label{e:fisher}
	\left\{
	\begin{aligned}
		& u_t = D_\gamma^\alpha u + u(1-u), \quad x\in\mathbb R,\ t \ge 0,
		\cr
		& u(x, 0) = \left(\frac{1}{2} - \frac{x}{2\sqrt{1 + x^2}}\right)^{\alpha/2}.
	\end{aligned}
	\right.
\end{equation}
Then, according to \cite{CabreRoquejoffre2013}, this equation gives rise to front propagation solutions that travel towards $u = 0$ with a wave speed $c(t)$ exponentially increasing in time, i.e., $c(t)\sim e^{\sigma t}$; more precisely, in the case of slow decaying initial conditions, $\sigma = f'(0) / \alpha$ or faster, whereas, in the case of fast decaying initial conditions, $\sigma = f'(0) / (1 + \alpha)$. Therefore, a very large spatial domain or the whole domain is required, in order to simulate adequately those fronts. Note that this behavior was reproduced numerically in \cite{cayamacuestadelahoz2021}, for the case $\gamma = 1$ (i.e., $(-\Delta)^{\alpha/2}$ was considered instead of $D_\gamma^\alpha$), and in \cite{CuestaDelaHozGirona2023}, for $\gamma = 0$ and $\alpha = 1$ (i.e., the half Laplacian $(-\Delta)^{1/2}$ was considered).

In this paper, we have considered an initial condition $u(x, 0)$ in \eqref{e:fisher} that decays slowly according to \cite{CabreRoquejoffre2013}. Moreover, we have chosen $\alpha = 1.37$, $N = 16384$, $l_{lim} = 100$, and different values of gamma, namely $\gamma\in\{-0.63, -0.625, \ldots, 0.625, 0.63\}\subset[\alpha-2,2-\alpha]$, in order to see what the effect of $\gamma$ on the solutions is. In order to advance in time, we have implemented a classical fourth-order Runge-Kutta method, taking $\Delta t = 0.05$, and $t\in[0,22]$.

\begin{figure}[!htbp]
	\centering
	\includegraphics[width=0.5\textwidth, clip=true]{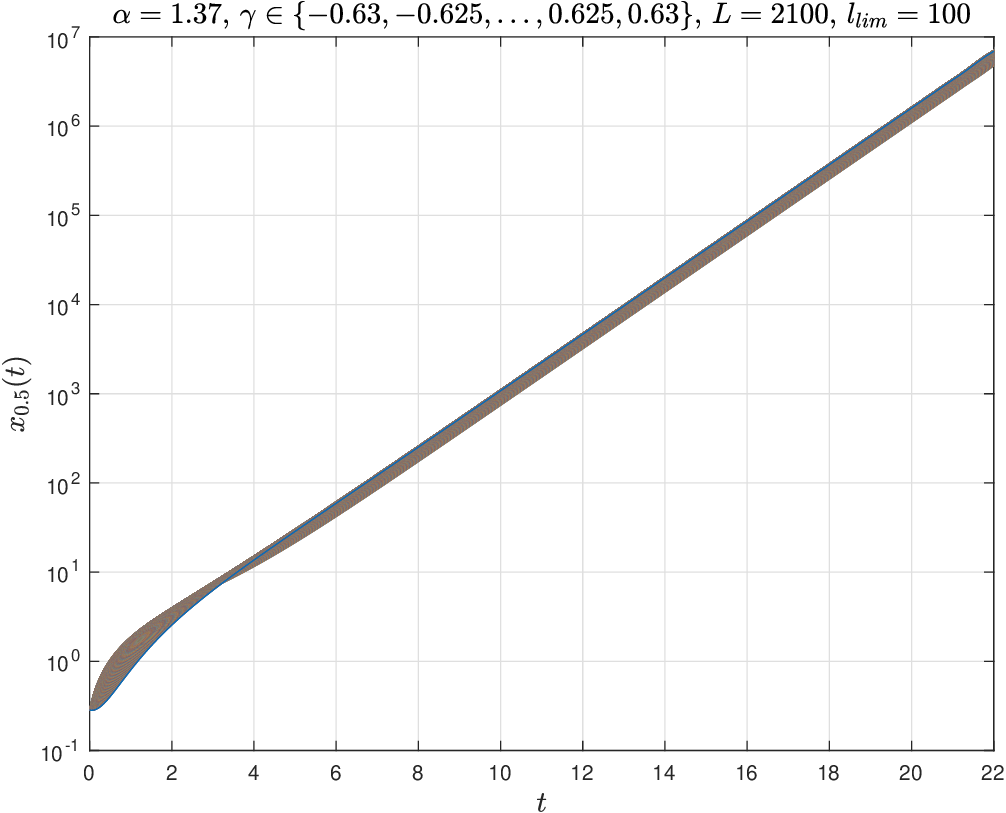}\includegraphics[width=0.5\textwidth, clip=true]{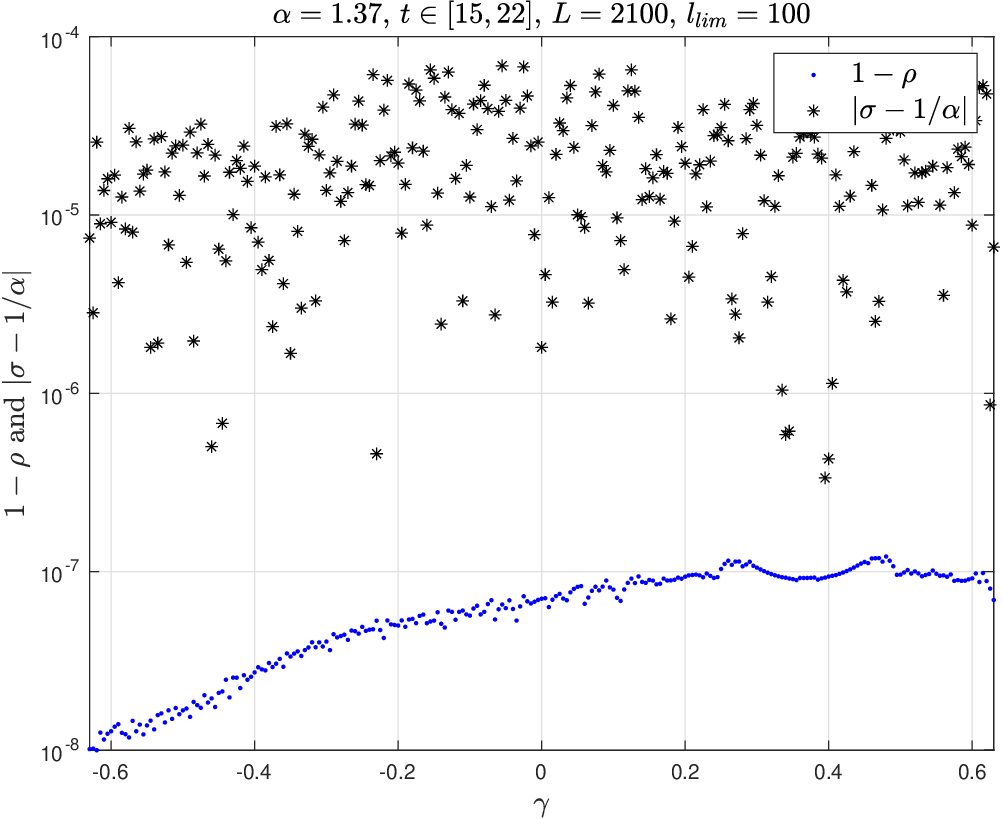}
	\caption{Left: $x_0(t)$ versus $t$. Right: $1 - \rho$ (blue dots) versus $t$, where $\rho$ is the Pearson correlation coefficient, and $|\sigma - 1/\alpha|$ (black stars) versus $t$, where $\sigma$ is the slope of the corresponding straight line on the left-hand side, such that the speed $c(t)\sim e^{\sigma t}$. In both sides, $\alpha = 1.37$, $\gamma \in \{-0.63, -0.625, \ldots, 0.625, 0.63\}$, $L = 2100$, $l_{lim} = 100$. }
	\label{f:slopes}
\end{figure}

We have found that, for all the values of $\gamma$ considered, $\lim_{x\to-\infty}u(x,t) = 1$ and $\lim_{x\to-\infty}u(x,t) = 0$, so, using $v(x) = 1/2 - \arctan(x)/\pi$, we can apply the techniques in this paper to $w(x, t) = u(x, t) - v(x)$, as it satisfies $\lim_{x\to\pm\infty}w(x, t) = 0$. Furthermore, for those values of $\gamma$, we have again front propagating solutions with exponentially growing speed. In order to capture the speed of a given front, we have 
tracked the evolution of the value of $x$ such that $u(x, t) = 0.5$, for a given $t$; this value, denoted as $x_{0.5}(t)$, can be computed by means of a bisection method, together with spectral interpolation (see \cite{cayamacuestadelahoz2021,CuestaDelaHozGirona2023}). Since $x_{0.5}(t)$ becomes quickly very large, it is necessary to take large values of $L$; in this regard, we have found that $L = 2100$ is enough for our purposes in all the cases.

On the left-hand side of Figure \ref{f:slopes}, we have plotted $x_{0.5}(t)$ versus $t\in[0,22]$ in semilogarithmic scale. Except for the initial times, when the exponential velocity regime has not still been achieved, the results reveal what seems to be parallel straight lines, that, when plotted together, resemble a narrow rectangle. To confirm this, we have computed the slopes $\sigma$ of those lines by the method of least squares, taking $t\in[15, 21]$, together with the Pearson correlation coefficient $\rho$. In all cases, $\rho$ is equal to one, except for a tiny error, and all the values of $\sigma$ are very close to $1 / \alpha$; more precisely, $1 - \rho\in[9.9629\times10^{-9}, 1.2163\times10^{-7}]$, and $|\sigma - 1 / \alpha|\in[3.3561\times10^{-7}, 6.8708\times10^{-5}]$. This can be seen on the right-hand side of Figure \ref{f:slopes}, where we have plotted $1 - \rho$ (blue dots) versus $t$ and $|\sigma - 1/\alpha|$ (black stars) versus $t$. In view of the results, there is in our opinion solid evidence that the speed $c(t)$ grows exponentially in time, and that $c(t)\sim e^{\sigma t} = e^{t / \alpha}$, for all $\gamma$, which is in agreement with the theory in \cite{CabreRoquejoffre2013}.

Finally, in Figure \ref{f:evolu}, we have plotted the evolution of \eqref{e:fisher} for $\gamma = -0.63$, and $t\in\{0, 0.5, \ldots, 21.5, 22\}$; the left-hand side shows the appearance of the front, that travels from the stable state $u = 1$ into the unstable one $u = 0$, whereas the right-hand side, in semilogarithmic scale, illustrates that the speed of the front becomes exponential as $t$ grows; indeed, except for the initial values of $t$, the curves of $u(x, t)$ are parallel with high accuracy, and we are able to capture this behavior for values of $x$ of the order of $\mathcal O(10^6)$.

\begin{figure}[!htbp]
	\centering
	\includegraphics[width=0.5\textwidth, clip=true]{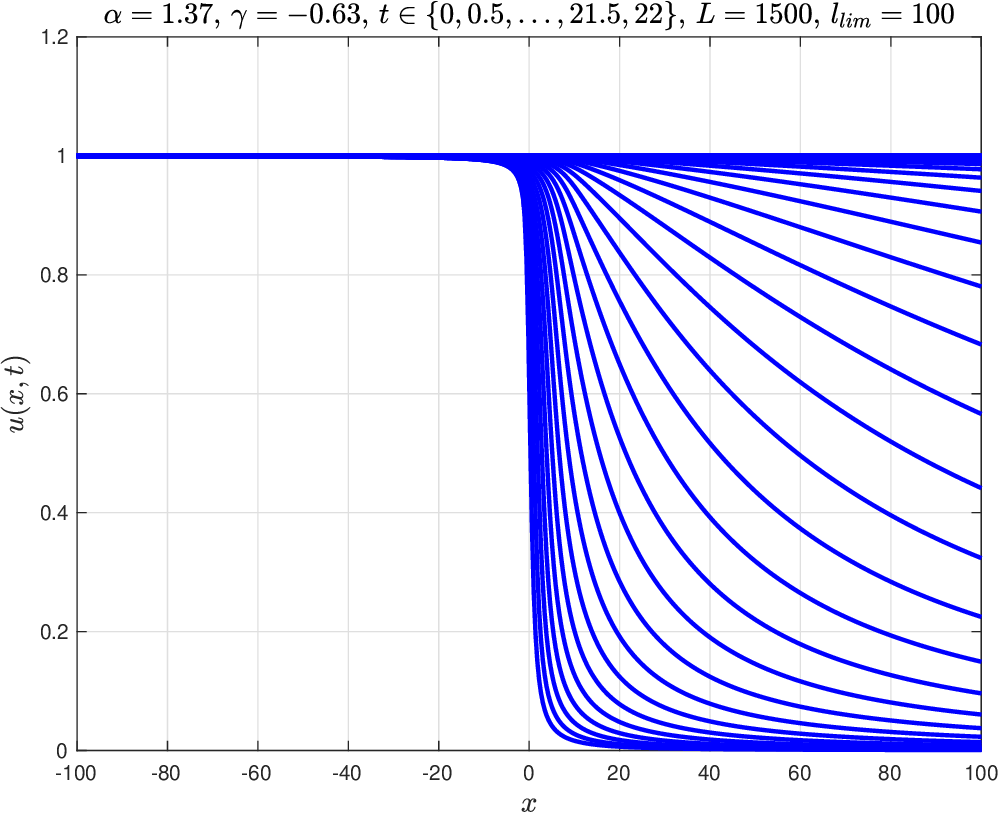}\includegraphics[width=0.5\textwidth, clip=true]{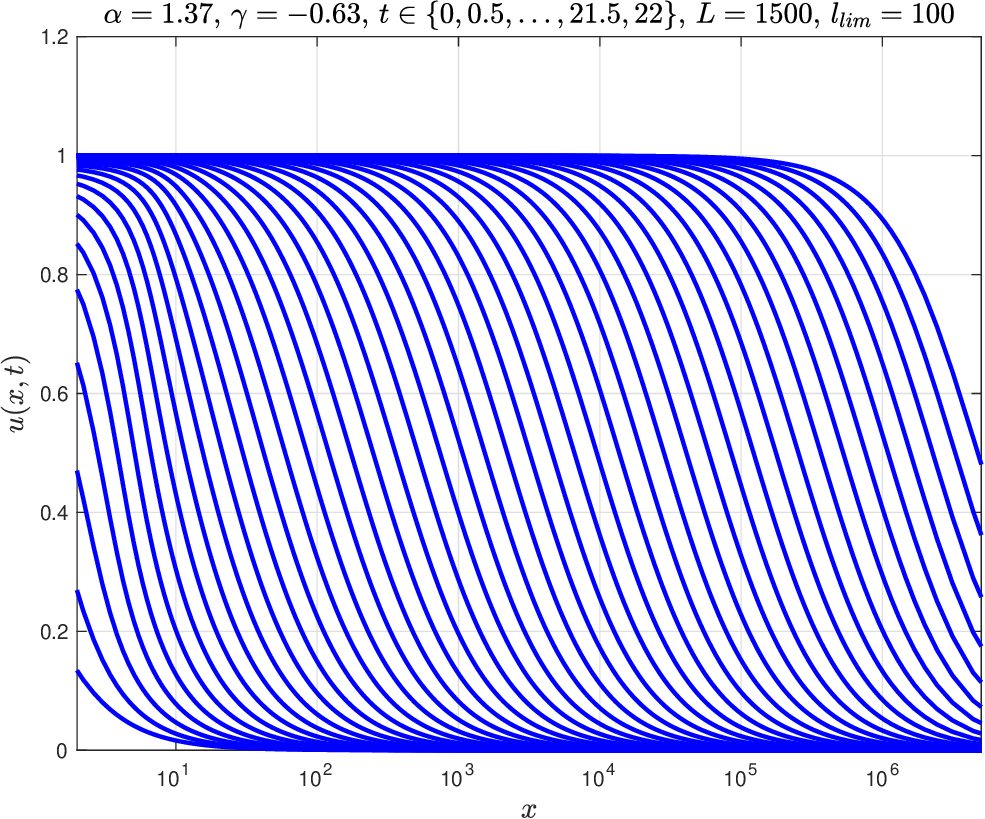}
	\caption{Evolution of \eqref{e:fisher} for $\alpha = 1.37$, $\gamma = -0.63$, $t\in\{0, 0.5, \ldots, 21.5, 22\}$, $L = 2100$, $l_{lim} = 100$. Left: $x\in[-100, 100]$. Right: $x\in[1, 10^6]$, and $u(x, t)$ appears in semilogarithmic scale.}
	\label{f:evolu}
\end{figure}

Doing a careful study of \eqref{e:fisher} lies beyond the scope of this paper. However, this example illustrates the adequacy of our numerical method to simulate over very large domains evolution equations involving Riesz-Feller operators.

\section*{Acknowledgments}

This work was partially supported by the research group grant IT1615-22 funded by the Basque Government, and by the project PID2021-126813NB-I00 funded by MCIN/AEI/10.13039/501100011033 and by ``ERDF A way of making Europe''. Ivan Girona was also partially supported by the MICINN PhD grant PRE2019-090478.

\end{document}